\newcommand{\C}{\mathbb{C}}
\newcommand{\ZZ}{\mathbb{Z}}
\newcommand{\QQ}{\mathbb{Q}}
\newcommand{\NN}{\mathbb{N}}
\newcommand{\PP}{\mathbb{P}}
\newcommand{\OO}{\mathcal O}
\newcommand{\Ss}{\mathcal S}
\newcommand{\DD}{\mathcal D}
\newcommand{\VV}{\mathcal V}
\newcommand{\MM}{\mathcal M}
\newcommand{\wt}{\widetilde}
\newcommand{\rom}{\romannumeral}
\newcommand*{\da@rightarrow}{\mathchar"0\hexnumber@\symAMSa 4B }
\newcommand*{\da@leftarrow}{\mathchar"0\hexnumber@\symAMSa 4C }
\newcommand*{\xdashrightarrow}[2][]{%
  \mathrel{%
    \mathpalette{\da@xarrow{#1}{#2}{}\da@rightarrow{\,}{}}{}%
  }%
}
\newcommand{\xdashleftarrow}[2][]{%
  \mathrel{%
    \mathpalette{\da@xarrow{#1}{#2}\da@leftarrow{}{}{\,}}{}%
  }%
}
\newcommand*{\da@xarrow}[7]{%
  \sbox0{$\ifx#7\scriptstyle\scriptscriptstyle\else\scriptstyle\fi#5#1#6\m@th$}%
  \sbox2{$\ifx#7\scriptstyle\scriptscriptstyle\else\scriptstyle\fi#5#2#6\m@th$}%
  \sbox4{$#7\dabar@\m@th$}%
  \dimen@=\wd0 %
  \ifdim\wd2 >\dimen@
    \dimen@=\wd2 %
  \fi
  \count@=2 %
  \def\da@bars{\dabar@\dabar@}%
  \@whiledim\count@\wd4<\dimen@\do{%
    \advance\count@\@ne
    \expandafter\def\expandafter\da@bars\expandafter{%
      \da@bars
      \dabar@ 
    }%
  }%
  \mathrel{#3}%
  \mathrel{%
    \mathop{\da@bars}\limits
    \ifx\\#1\\%
    \else
      _{\copy0}%
    \fi
    \ifx\\#2\\%
    \else
      ^{\copy2}%
    \fi
  }%
  \mathrel{#4}%
}
\DeclareMathOperator{\aut}{Aut}
\DeclareMathOperator{\ide}{id}
\DeclareMathOperator{\ord}{ord}
\newtheorem{theorem}{Theorem}[section]
\newtheorem{lemma}[theorem]{Lemma}
\newtheorem{sublemma}[theorem]{Sublemma}
\newtheorem{corollary}[theorem]{Corollary}
\newtheorem{proposition}[theorem]{Proposition}
\newtheorem{conjecture}[theorem]{Conjecture}
\newtheorem{nonumbering}{Theorem}
\newtheorem{convention}{Conventions}
\theoremstyle{definition}
\newtheorem{remark}[theorem]{Remark}
\newtheorem{definition}[theorem]{Definition}
\newtheorem{notation}[theorem]{Notation}
\newtheorem{nonumberingt}{Acknowledgements}
\begin{document}

\author[Robert Laterveer]
{Robert Laterveer}

\address{Institut de Recherche Math\'ematique Avanc\'ee,
CNRS -- Universit\'e 
de Strasbourg,\
7 Rue Ren\'e Des\-car\-tes, 67084 Strasbourg CEDEX,\
FRANCE.}
\email{robert.laterveer@math.unistra.fr}

\title{Bloch's conjecture for some numerical Campedelli surfaces}

\begin{abstract} We prove Bloch's conjecture for numerical Campedelli surfaces with fundamental group of order $9$. 
\end{abstract}

\keywords{Algebraic cycles, Chow groups, motives, Bloch conjecture, surfaces of general type, Voisin's ``spread'' method}
\subjclass[2010]{Primary 14C15, 14C25, 14C30.}

\maketitle


A {\em numerical Campedelli surface\/} is a minimal surface $S$ of general type with $p_g(S)=0$ (and hence $q(S)=0$) and $K_S^2=2$.
Examples of such surfaces are given in \cite{Cam}, \cite{Pet}, \cite{R1}, \cite{R2}, \cite{Sup}, \cite{Kul}, \cite{LP}.
It is known that the order of the algebraic fundamental group $G=\pi_1^{alg}(S)$ of a numerical Campedelli surface ranges from $0$ to $9$ \cite{R1}, \cite[Chapter VII.10]{BPHV}. Numerical Campedelli surfaces with $G=\ZZ_2^3$ are called {\em classical Campedelli surfaces\/} \cite{Cam}, \cite{Kul}.

For any smooth projective variety $M$, let $A^i(M):=CH^i(M)$ denote the Chow groups (i.e. codimension $i$ algebraic cycles modulo rational equivalence).
It is known since Mumford's work \cite{Mum} that if $S$ is a surface with trivial Chow group of $0$-cycles (i.e. $A^2(S)_{\QQ}=\QQ$), then
$p_g(S)=q(S)=0$. Bloch has famously conjectured the converse:

\begin{conjecture}[Bloch \cite{B}]\label{blochconj} Let $S$ be a surface with $p_g(S)=q(S)=0$. Then
  \[ A^2(S)=\ZZ\ .\]
  \end{conjecture}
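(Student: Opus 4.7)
The plan is to attack Conjecture \ref{blochconj} via the language of finite-dimensional motives in the sense of Kimura and O'Sullivan. First, I would reduce to the case where $S$ is of general type: for minimal surfaces with Kodaira dimension $\kappa(S)\le 1$ and $p_g=q=0$, the conjecture is classical (due to Bloch, Kas and Lieberman), via the Albanese argument combined with the Enriques--Kodaira classification. So the real content of the statement is the case $\kappa(S)=2$, i.e.\ surfaces of general type with vanishing geometric and irregular genus.

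For a surface $S$ with $p_g=q=0$, the Chow--K\"unneth decomposition is particularly clean: one has a direct-sum decomposition $h(S)=\mathbf{1}\oplus h^2(S)\oplus\mathbf{1}(-2)\oplus t_2(S)$ in the category of Chow motives, where $t_2(S)$ is the transcendental motive. Its Betti realization is the transcendental cohomology $H^2_{\mathrm{tr}}(S,\QQ)$, which vanishes because $p_g(S)=0$. Thus the Chow-theoretic content of Bloch's conjecture is precisely the vanishing $t_2(S)=0$ as a Chow motive; this is in turn equivalent to $A^2(S)_{\mathrm{hom}}=0$, which combined with $q(S)=0$ yields $A^2(S)=\ZZ$ as required.

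By the Kimura--O'Sullivan nilpotency theorem, any finite-dimensional motive with vanishing cohomological realization is itself zero. Hence the strategy reduces to exhibiting finite-dimensionality of $t_2(S)$. The plan would be to construct, for each surface of general type with $p_g=q=0$, a smooth projective variety $Y$ whose motive is known to be finite-dimensional (for instance a product of curves, an abelian variety, or a Kummer-type construction on such) together with a surjective correspondence $Y\dashrightarrow S$, so that $h(S)$ appears as a direct summand of $h(Y)$ and inherits finite-dimensionality.

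The hard part will be producing such a $Y$ uniformly across the vast and still poorly understood zoo of surfaces of general type with $p_g=q=0$ --- Godeaux, Campedelli, Barlow, Burniat, Inoue, Keum--Naie and many others --- where finite-dimensionality is known only case by case. The standard tactic is to pass to the \'etale cover $\widetilde S\to S$ corresponding to $\pi_1^{alg}(S)$ and to exploit either a rich automorphism group on $\widetilde S$ or an explicit dominant morphism from a Kimura-finite variety, then to descend to $S$ via Voisin's ``spread'' technique in families in order to handle the deformation-theoretic obstructions that block naive direct arguments on $0$-cycles. No unconditional construction of such a $Y$ is known for the entire class, which is why the present paper must restrict to the case $|\pi_1^{alg}(S)|=9$: there, the order-$9$ \'etale cover is amenable to an explicit geometric analysis that trivialises $t_2(\widetilde S)$ and, by an averaging argument over the Galois group, $t_2(S)$ as well.
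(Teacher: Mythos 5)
The statement you are addressing is Bloch's Conjecture itself, which the paper does not prove and explicitly describes as ``notoriously still open for surfaces of general type''; the paper only establishes the special case of numerical Campedelli surfaces with $\pi_1^{alg}(S)$ of order $9$. Your proposal does not supply a proof either: the decisive step --- producing, for an arbitrary minimal surface of general type with $p_g=q=0$, a Kimura-finite variety $Y$ together with a dominant correspondence onto $S$ --- is precisely the open problem, as you concede in your final paragraph. Moreover, the reduction you perform is essentially circular: for a surface with $p_g=q=0$, finite-dimensionality of $t_2(S)$ (equivalently of $h(S)$) is known to be \emph{equivalent} to $A^2(S)=\ZZ$ (Kimura in one direction, Guletskii--Pedrini via Bloch--Srinivas in the other), so ``it suffices to show $t_2(S)$ is finite-dimensional'' restates the conjecture rather than reducing it. The preliminary reduction to the general-type case via the Enriques--Kodaira classification and the Bloch--Kas--Lieberman argument is correct, but that part is classical and carries no new content.

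It is worth contrasting your strategy with what the paper actually does in the one case it can handle. Rather than trying to establish finite-dimensionality of $h(S)$ a priori, the paper runs Voisin's spread argument on the explicit Mendes Lopes--Pardini families: it shows that the relevant self-correspondence of the fibre becomes \emph{algebraically} trivial after spreading over the family (the key input being the vanishing of a codimension-$2$ Griffiths group of the fibre product, proved by a stratification argument in Lawson homology, proposition \ref{van}), and then applies the \emph{unconditional} nilpotence theorem of Voevodsky and Voisin for algebraically trivial correspondences (proposition \ref{weaknilp}). This sidesteps any a priori appeal to Kimura finite-dimensionality of $S$ in the main argument. If you wish to pursue the motivic route, you must identify a class of surfaces for which the dominating Kimura-finite $Y$ can actually be constructed; for the conjecture in full generality no such construction is known, and your proposal does not change that.
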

  
  Bloch's conjecture has been hugely influential in the development of the subject of algebraic cycles (cf. \cite{Jan}, \cite{MNP}, \cite{Vo} for fascinating overviews of the field). It is notoriously still open for surfaces of general type. 
  
 Conjecture \ref{blochconj} is known to be true for classical Campedelli surfaces (this is a nice argument based on the presence of involutions, cf. \cite[Theorem 3]{IM} or alternatively \cite[Theorem 5.1]{PW}). Voisin \cite{V8} has proven conjecture \ref{blochconj} for a family of numerical Campedelli surfaces with $\ord (G)=5$. 
The aim of this note is to prove Bloch's conjecture for numerical Campedelli surfaces with maximal fundamental group:

\begin{nonumbering}[=theorem \ref{main}] Let $S$ be a numerical Campedelli surface with $\ord(G)=9$. Then
  \[ A^2(S)=\ZZ\ .\]
  \end{nonumbering}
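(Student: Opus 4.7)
The natural first move is to pass to the étale Galois cover $\pi\colon\tilde S\to S$ of degree $9$ with Galois group $G=\pi_1^{alg}(S)$. By the standard transfer argument ($\pi^*\pi_*=\sum_{g\in G}g^*$ and $\pi_*\pi^*=9\cdot\ide$), pullback yields an isomorphism $\pi^*\colon A^2(S)_\QQ\xrightarrow{\sim}A^2(\tilde S)_\QQ^G$, so it suffices to prove $A^2(\tilde S)_\QQ^G=\QQ$. Since $\pi$ is étale, $H^{2,0}(\tilde S)^G\cong H^{2,0}(S)=0$, hence the invariant subspace $H^2(\tilde S,\QQ)^G\cong H^2(S,\QQ)$ is entirely of Hodge type $(1,1)$ and therefore algebraic by the Lefschetz $(1,1)$ theorem. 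Equivalently, the symmetrizing correspondence
$$\Pi_G:=\tfrac{1}{9}\sum_{g\in G}[\Gamma_g]\in A^2(\tilde S\times\tilde S)_\QQ$$
has the same cohomology class as an algebraic projector $\Pi_{\mathrm{alg}}$ supported on a union of products of divisors (plus the customary $H^0$-- and $H^4$--corrections).

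My plan is then to apply Voisin's ``spread'' method, following the template of \cite{V8}. Fix an explicit model realizing $\tilde S$ together with its $G=\ZZ_3^2$-action inside a suitable representation -- for instance via the Reid / Mendes Lopes--Pardini description of Campedelli surfaces with $\ord(G)=9$ -- and let $f\colon\tilde\Ss\to B$ denote the resulting parameter family of $G$-equivariant deformations. Over $B$, the fibrewise cohomology class of the relative symmetrizer agrees with that of a relative algebraic projector built from deforming $(1,1)$-classes. Voisin's spreading principle then upgrades this fibrewise cohomological equality to a Chow-theoretic equality in $A^2(\tilde\Ss\times_B\tilde\Ss)_\QQ$, modulo cycles supported on $D\times_B\tilde\Ss$ for some relative divisor $D$.

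Specializing to the fibre corresponding to our $\tilde S$ gives a decomposition $\Pi_G=\Pi_{\mathrm{alg}}+Z$ in $A^2(\tilde S\times\tilde S)_\QQ$, with $Z$ supported on $D_b\times\tilde S$. Applied to a homologically trivial $0$-cycle $z\in A^2(\tilde S)_{\mathrm{hom},\QQ}^G$, the term $\Pi_{\mathrm{alg}}$ acts as zero (its non-trivial Künneth components are supported on products of proper subvarieties, so a moving-lemma representation of $z$ kills the image), and $Z$ acts as zero by the same moving-lemma argument applied to $D_b$. Hence $\Pi_G$ annihilates $A^2(\tilde S)_{\mathrm{hom},\QQ}^G$; since $\Pi_G$ acts as the identity on $G$-invariants, this forces $A^2(\tilde S)_{\mathrm{hom},\QQ}^G=0$, and combined with $A^2(\tilde S)_\QQ/A^2(\tilde S)_{\mathrm{hom},\QQ}=\QQ$ we conclude $A^2(\tilde S)_\QQ^G=\QQ$, as desired.

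The main obstacle is verifying Voisin's spreading hypotheses for this specific family: one needs a base $B$ that is sufficiently connected (ideally rationally connected, or at least with the ``genericity'' needed for the spreading theorem), and one must check that the $G$-invariant $(1,1)$-classes on $\tilde S$ extend to relative Hodge classes over the whole of $B$. Pinning down an explicit enough model for $\tilde S$ together with its $\ZZ_3^2$-action -- so that both the relative Lefschetz extension and the cohomological-to-Chow-theoretic spreading step go through -- will be the technical heart of the argument; once that is in place, the remaining spreading and moving-lemma steps proceed along the lines of \cite{V8}.
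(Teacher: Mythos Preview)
Your overall architecture matches the paper's: pass to the $G$-cover, use the symmetrizer $\Pi_G$, observe that $H^2(\tilde S)^G$ is algebraic, spread over the Mendes Lopes--Pardini family, and conclude. But there is a genuine gap at the step you gloss as ``Voisin's spreading principle then upgrades this fibrewise cohomological equality to a Chow-theoretic equality''. Spreading (Voisin's Hilbert-scheme argument) only produces a relative cycle $\gamma$ supported on $\DD\times_B\DD$ such that $\Pi_G-\gamma$ is \emph{fibrewise homologically trivial}; a Leray-spectral-sequence correction then makes it \emph{globally} homologically trivial in $A^2(\VV\times_B\VV)_\QQ$. That is still far from rational equivalence. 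The paper's central input (Proposition~\ref{van}) is the vanishing of the Griffiths group $B^2_{hom}(\VV\times_B\VV)_\QQ$, proved by a stratification argument in Lawson homology using that the total space is a stratified projective bundle over $W\times W$ with $W$ having trivial Chow groups. Only then can one invoke Voevodsky--Voisin weak nilpotence to get $(\Pi_G-\gamma-\delta)_b^{\circ N}=0$ in Chow, and use idempotence of $(\Pi_G)_b$ to finish. Your proposal does not identify this Griffiths-group vanishing (it is the fibre product, not the base, whose cycle theory must be controlled), nor the nilpotence/idempotence mechanism that converts algebraic triviality into the desired decomposition; these are not formalities and constitute the technical heart of the paper.

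Two further points you will need to address. First, the cover $\tilde S$ is the canonical cover of the \emph{canonical model} $\bar S$, which in general has rational double points; the paper handles this via Alexander-scheme formalism (Proposition~\ref{alex}) so that correspondences still act, and uses a specialization argument (Proposition~\ref{gen}) to pass from the very general fibre to every fibre. Second, not every such $S$ has $\bar S=V/G$ with $V$ a divisor in the ambient threefold: the ``type B2'' surfaces of \cite{MLP} require a separate degeneration argument, again via Proposition~\ref{gen}. Without these ingredients your outline does not cover all numerical Campedelli surfaces with $\ord(G)=9$.
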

  
 The surfaces covered by theorem \ref{main} correspond to two irreducible components of the moduli space (of surfaces of general type with $p_g=0$ and $K^2=2$), one of dimension $6$ and one of dimension $7$ \cite[Theorem 4.2]{MLP}.
  
As a corollary, we also obtain Bloch's conjecture for certain numerical Godeaux surfaces (corollary \ref{cor}). 

The argument proving theorem \ref{main} follows the pattern of Voisin's seminal method of {\em spread\/} of algebraic cycles in families \cite{V0}, \cite{V1}, \cite{Vo}, \cite{V8}. In the case at hand, this method can be applied thanks to work of Mendes Lopes--Pardini, who furnish a nice explicit construction of numerical Campedelli surfaces with $G$ of order $9$ (\cite{MLP}, cf. also theorem \ref{const} below).
A central point of the argument is the vanishing of a certain Griffiths group of the fibre product of the family (proposition \ref{van}); this is proven using a stratification argument and some basic properties of Lawson homology.

 \vskip0.6cm

\begin{convention} In this article, the word {\sl variety\/} will refer to a reduced irreducible scheme of finite type over $\C$. A {\sl subvariety\/} is a (possibly reducible) reduced subscheme which is equidimensional. 

We will denote by $A_j(X)$ the Chow group of $j$-dimensional cycles on $X$; for $X$ smooth of dimension $n$ the notations $A_j(X)$ and $A^{n-j}(X)$ are used interchangeably. We will write $B_j(X)$ for the group of $j$-dimensional cycles modulo algebraic equivalence, and $B^{n-j}(X)=B_j(X)$ when $X$ is smooth of dimension $n$.

The notations $A^j_{hom}(X)$, $A^j_{AJ}(X)$ will be used to indicate the subgroups of homologically trivial, resp. Abel--Jacobi trivial cycles.
Likewise, we will write $B^i_{hom}(X)$ for the subgroup of homologically trivial cycles (the groups $B^i_{hom}(X)$ are traditionally known as the {\em Griffiths groups\/} of $X$).


We use $H^j(X)$ to indicate singular cohomology $H^j(X,\QQ)$, and $H_j(X)$ to indicate Borel--Moore homology $H_j^{BM}(X,\QQ)$.
\end{convention}

\section{Preliminaries}

\subsection{Singular surfaces}

This subsection gathers some properties of singular surfaces that are Alexander schemes, in the sense of Vistoli and Kimura. These surfaces $S$ have the desirable property that the Chow groups with rational coefficients of $S$ and all its powers $S^m$ behave just as well as Chow groups of smooth varieties.

\begin{proposition}\label{alex} Let $S$ be a normal projective surface, and assume there is a resolution of singularities such that the exceptional divisor is a union of rational curves. Then $S^m$ is an Alexander scheme in the sense of \cite{Vis}, \cite{Kim0}. In particular, the formalism of correspondences with rational coefficients works for $S^m$ just as for smooth projective varieties.
\end{proposition}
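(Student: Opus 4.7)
The plan is to invoke the theory of Alexander schemes developed in \cite{Vis}, \cite{Kim0}. Recall that a variety $X$ is said to be Alexander when its rational Chow groups $A^*(X)_\QQ$ carry the full intersection-theoretic formalism one usually has for smooth projective varieties, namely intersection product, flat pullback, proper pushforward, and the associated composition of correspondences. By construction, this class of schemes is closed under finite products, so the strategy splits naturally into two steps: first, establish that $S$ itself is Alexander; second, deduce the same for $S^m$.

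For the first step, I would apply the sufficient criterion of Kimura to the given resolution $\pi\colon \tilde S\to S$. Since every irreducible component of the exceptional divisor is a smooth rational curve, its rational Chow groups are as simple as possible, namely $A^*(\PP^1)_\QQ\cong\QQ\oplus\QQ$; equivalently, the exceptional locus has ``combinatorially trivial'' Chow structure with $\QQ$-coefficients. This is precisely the type of input required by \cite{Kim0}: one verifies that the pushforward $\pi_*\colon A^*(\tilde S)_\QQ\to A^*(S)_\QQ$ is surjective with kernel generated by cycles supported on the exceptional locus, in a way compatible with proper intersection. That compatibility in turn comes down to a direct check: a configuration of rational curves meeting transversely has its cycle-theoretic behavior entirely governed by its dual graph over $\QQ$, which is the standard hypothesis in Kimura's formalism.

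For the second step, I would invoke the stability of the Alexander property under finite products, also established in \cite{Vis}, \cite{Kim0}. Iterating this $m-1$ times yields that $S^m$ is Alexander for every $m\geq 1$. The second assertion of the proposition -- that the formalism of correspondences with rational coefficients works for $S^m$ just as for smooth projective varieties -- is then a formal consequence, since that formalism is built into the very definition of the Alexander property: one has an intersection pairing on $A^*(S^m\times S^m)_\QQ$, and the usual composition rule $\beta\circ\alpha=(p_{13})_*\bigl(p_{12}^*\alpha\cdot p_{23}^*\beta\bigr)$ makes sense and satisfies associativity.

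The only nontrivial point in this plan is checking that the hypothesis on rational exceptional curves is precisely strong enough to trigger Kimura's criterion. This is, however, the prototypical case envisaged by the theory, so no real subtlety intervenes; the entire argument amounts to quoting the appropriate results from \cite{Vis}, \cite{Kim0}.
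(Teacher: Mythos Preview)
Your overall strategy is the same as the paper's: show that $S$ is Alexander, then use stability under products, then note that the correspondence formalism comes for free. However, the first step as you describe it is not accurate. The characterization of normal surfaces that are Alexander in terms of a resolution with rational exceptional curves is a theorem of Vistoli \cite[Theorem 4.1]{Vis}, not a criterion of Kimura, and it is an equivalence, not merely a sufficient condition that one must laboriously verify. There is no need for the check you sketch about ``$\pi_*$ surjective with kernel generated by cycles on the exceptional locus, compatible with proper intersection'': that is not what the Alexander property means, and in any case the hypothesis on rational exceptional curves already matches Vistoli's statement on the nose. The paper simply quotes Vistoli's theorem, then invokes \cite[Remark 2.7(\rom1)]{Kim0} for product stability, and also records (via \cite[Corollary 4.5]{Kim0}) that the two definitions of ``Alexander scheme'' in \cite{Vis} and \cite{Kim0} agree for connected schemes, a point you do not address but which is needed to pass freely between the two references.
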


\begin{proof} The fact that $S$ is an Alexander scheme is a result of Vistoli's:

\begin{theorem}[Vistoli \cite{Vis}]\label{vis} Let $S$ be a normal surface. The following are equivalent:

\noindent
(\rom1) $S$ is an Alexander scheme;

\noindent
(\rom2) there exists a resolution of singularities of $S$ such that  the exceptional divisor is a union of rational curves.
\end{theorem}

\begin{proof} This is 
\cite[Theorem 4.1]{Vis}.
\end{proof}

 Since the property ``being an Alexander scheme'' is stable under products \cite[Remark 2.7(\rom1)]{Kim0}, $S^m$ is an Alexander scheme. (NB: the fact that the definitions of Alexander scheme given in \cite{Vis} and \cite{Kim0} coincide for connected schemes is 
\cite[Corollary 4.5]{Kim0}.)

The fact that products of Alexander schemes are Alexander, and that Chow groups of an Alexander scheme have an intersection product and a pullback, means that the formalism of correspondences can be extended from smooth projective varieties to projective Alexander schemes.
\end{proof}

We will use proposition \ref{alex} in the following guise:

\begin{proposition}\label{gen} Let $\VV\to B$ be a flat projective morphism of relative dimension $2$, where $B\subset\PP^r$ is a Zariski open, and assume there is a finite group $G$ acting on $\VV$ and preserving the fibres $V_b$. Assume moreover the following:

\noindent
(\rom1) any fibre $V_b$ is a projective surface that is Alexander, and $H^1(V_b)=0$;

\noindent
(\rom2) for very general $b\in B$, the fibre $V_b$ is a quotient variety (i.e. a global quotient $V_b=V^\prime_b/H$ where $V^\prime_b$ is smooth and $H$ is a finite group), and one has $A_0^{hom}(V_b)_{\QQ}^G=0$.
 
Then 
  \[ A_0^{hom}(V_b)_{\QQ}^G=0\ \ \  \forall \ b\in B\ .\]
\end{proposition}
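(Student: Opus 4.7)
The plan is to apply Voisin's spread method together with a Bloch--Srinivas type decomposition of the $G$-invariant projector. The central tool is Proposition \ref{alex}, which provides the formalism of correspondences with rational coefficients on the (possibly singular) fibres $V_b$ and their self-products.

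First I would set up relative cycles on $\VV\times_B\VV$: the $G$-invariant projector
\[
\pi^G := \frac{1}{|G|}\sum_{g\in G}\Gamma_g \ \in\ A^2(\VV\times_B\VV)_\QQ,
\]
where $\Gamma_g$ is the relative graph of $g$, and (after possibly shrinking $B$) a $G$-invariant relative $0$-cycle $\mathfrak{v}_0\subset\VV$ of degree $1$, obtained by averaging a multisection over $G$, giving the constant correspondence $\epsilon_0 := \VV\times_B\mathfrak{v}_0$. For very general $b$, hypothesis (ii) gives $A_0^{hom}(V_b)_\QQ^G=0$, whence $A_0(V_b)_\QQ^G\cong\QQ$ via the degree map. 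Applying the Bloch--Srinivas decomposition-of-the-diagonal argument to the Alexander scheme $V_b\times V_b$ (valid by Proposition \ref{alex}, lifting to the smooth cover $V_b'$ of the quotient $V_b=V_b'/H$) yields a divisor $D_b\subset V_b$ and a cycle $Z_b$ supported on $D_b\times V_b$ satisfying
\[
\pi^G_b - \epsilon_{0,b} \ =\ Z_b \quad \text{in } A^2(V_b\times V_b)_\QQ.
\]

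Next comes the spreading step. The data $(D_b,Z_b)$ fits into relative cycles $(\mathcal{D},\mathcal{Z})$ defined over a Zariski open $U\subset B$, and via the localization exact sequence for $A^2$ one obtains
\[
\pi^G - \epsilon_0 - \bar{\mathcal{Z}} \ =\ \mathcal{T} \quad \text{in } A^2(\VV\times_B\VV)_\QQ,
\]
where $\bar{\mathcal{Z}}$ is a global extension of the divisor-supported cycle and $\mathcal{T}$ is a residue supported over $B\setminus U$. For $b\in U$, restricting to $V_b\times V_b$ and acting on $\alpha\in A_0^{hom}(V_b)_\QQ$: the $\epsilon_0$-term vanishes by degree, the $\bar{\mathcal{Z}}_b$-term vanishes after moving $\alpha$ off $D_b$, and $\mathcal{T}_b=0$; hence $(\pi^G_b)_*\alpha=0$ in $A_0(V_b)_\QQ$, and $A_0^{hom}(V_b)_\QQ^G=0$ follows.

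The main obstacle is handling the fibres over $B\setminus U$. This is addressed by Noetherian induction on $\dim B$, applying the proposition to the restricted family $\VV_{B\setminus U}\to B\setminus U$. Hypothesis (i) is inherited automatically, and hypothesis (ii) persists along each irreducible component of $B\setminus U$ not contained in the countable union of bad loci that defines ``very general $b\in B$''; components that do lie in that union are further stratified and treated by the same induction. The base case $\dim B=0$ is precisely hypothesis (ii).
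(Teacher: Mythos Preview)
Your Bloch--Srinivas decomposition $\pi^G_b-\epsilon_{0,b}=Z_b$ in $A^2(V_b\times V_b)_\QQ$ for very general $b$ is correct and is in fact a pleasant shortcut: the paper instead first produces only a \emph{cohomological} equality $\Delta_G^-\vert_b=\gamma_b$ in $H^4$, and then has to invoke finite-dimensionality of the motive $(V_b,\Delta_G,0)$ (a consequence of $A_0(V_b)^G_\QQ=\QQ$) to upgrade this to a Chow-level equality via nilpotence. Your route avoids that detour.

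The genuine gap is in the step extending the conclusion from a Zariski open $U$ to all of $B$. Your Noetherian induction does not close. The issue is exactly the one you flag: an irreducible component $C$ of $B\setminus U$ may lie entirely inside the countable union of closed subvarieties whose complement is the ``very general'' locus. On such a $C$ you have \emph{no} point at which hypothesis (\rom2) is known to hold, so you cannot re-run Bloch--Srinivas there, and ``further stratifying'' $C$ gives nothing new --- each stratum is still contained in the bad locus. The base case $\dim B=0$ is not covered by hypothesis (\rom2): hypothesis (\rom2) says nothing about those finitely many special points.

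The paper's proof handles this differently, and this is the idea you are missing. First it reduces to $\dim B=1$ by slicing with a very general line through the given point $b_0$; the very general point of such a line is still very general in $B$, so hypothesis (\rom2) is inherited. Then, after Voisin's spreading argument has produced a relative cycle $\Gamma=\Delta_G^--\gamma''$ with $\Gamma\vert_b=0$ in $A^2(V_b\times V_b)_\QQ$ for very general $b$, one invokes Lemma \ref{voi}: the locus $\{b:\Gamma\vert_b=0\}$ is a countable union of Zariski closed subsets. Since it contains the complement of another countable union of proper closed subsets, one of the closed pieces must be all of $B$, hence $\Gamma\vert_b=0$ for \emph{every} $b$, with no induction needed. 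A further point the paper is careful about (and which your closure $\bar{\mathcal Z}$ does not obviously satisfy) is that the relative divisor $\DD$ coming out of Voisin's Hilbert-scheme spread has all its components dominating $B$, so that $\DD\cap V_b$ is still a divisor for every $b$; without this, over special $b$ your $\bar{\mathcal Z}_b$ could be supported on all of $V_b\times V_b$ and would no longer act trivially on $A_0^{hom}$.
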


\begin{proof} Taking a very general line in $B$ passing through a given $b$, we may assume without loss of generality that $r=\dim B=1$.

Let us consider the relative cycle
  \[ \Delta_G:=  {1\over G}\, {\displaystyle\sum_{g\in G}}\Gamma_g\ \ \ \in A_{3}(\VV\times_B \VV)_{\QQ}\ .\]
  For very general $b\in B$, the restriction 
  \[ \Delta_G\vert_b:= \Delta_G\vert_{V_b\times V_b}\ \ \ \in A_2(V_b\times V_b)_{\QQ} \]
  is well--defined (using the refined Gysin homomorphism \cite{F}), and its cohomology class is such that
  \[ \Delta_G\vert_b\ \ \ \in\ H^4(V_b\times V_b)\cong H_4(V_b\times V_b) \]
  (since $V_b\times V_b$ is a quotient variety).
  Let us consider the K\"unneth decomposition
   \[ H^4(V_b\times V_b)\cong H^4(V_b)\otimes H^0(V_b)\oplus H^2(V_b)\otimes H^2(V_b)\oplus H^0(V_b)\otimes H^4(V_b)\ .\]
   Taking $h\in A^1(\VV)$ a relatively ample Cartier divisor, one can define
   \[ \begin{split} \Delta_G^0 &:=  \alpha_0\,  {\displaystyle\sum_{g\in G}}  (p_1)^\ast g^\ast(h^2) \ \ \ \hbox{in}\ A_{3}(\VV\times_B \VV)_{\QQ}\ ,\\
                          \Delta_G^4 &:=  \alpha_4\,  {\displaystyle\sum_{g\in G}}  (p_2)^\ast g^\ast(h^2) \ \ \ \hbox{in}\ A_{3}(\VV\times_B \VV)_{\QQ}\ ,\\   
                         \end{split}\]
                 where $p_1, p_2\colon \VV\times_B \VV\to \VV$ denote projection on the first, resp. second summand.
           For appropriate constants $\alpha_j\in\QQ$, and for very general $b\in B$, these relative cycles will be such that the restrictions
         \[           \Delta_G^0\vert_b\ ,\ \ \ \Delta_G^4\vert_b\ \ \ \in H^4(V_b\times V_b)   \]
      are the first, resp. the last components of the K\"unneth decomposition of the restriction $\Delta_G\vert_b$. That is, the relative cycle 
      \[ \Delta_G^- := \Delta_G - \Delta_G^0 - \Delta_G^4\ \ \ \in\   A_{3}(\VV\times_B \VV)_{\QQ}\ \]
    has the property that for very general $b\in B$, the cohomology class of the restriction satisfies
     \[ \Delta_G^-\vert_b\ \ \ \in H^2(V_b)\otimes H^2(V_b)\ \ \ \subset\ H^4(V_b\times V_b)\ .\]
     
     By assumption, the Chow group $A_0(V_b)_{\QQ}^G$ is trivial for very general $b\in B$. The Bloch--Srinivas argument \cite{BS} still goes through for quotient varieties, and so this implies $H^{2,0}(V_b)^G=0$. The Lefschetz (1,1) theorem (which still goes through for quotient varieties) thus implies that there is an isomorphism
     \[ A^1(V_b)_{\QQ}^G\ \xrightarrow{\cong}\ H^2(V_b)^G \]
     (here $A^1(V_b)$ denotes the Picard group, which with rational coefficients is the same as the group of Weil divisors $A_1(V_b)$ since $V_b$ is $\QQ$-factorial).
      
     The upshot of all this is as follows: we have a relative cycle $\Delta_G^-$, and for very general $b\in B$, there exist a divisor $D_b\subset V_b$ and a cycle $\gamma_b\in A^2(V_b\times V_b)_{\QQ}$ supported on $D_b\times D_b$, such that
     \[ \Delta_G^-\vert_b=\gamma_b\ \ \ \hbox{in}\ H^4(V_b\times V_b)\ .\]
    Applying Voisin's Hilbert schemes argument \cite[Proposition 3.7]{V0} to this set--up, we can find a divisor $\DD\subset\VV$, and a relative cycle $\gamma\in A_{}(\VV\times_B \VV)_{\QQ}$ supported on $\DD\times_B \DD$, with the property that 
      \[ (\Delta_G^- -\gamma)\vert_b =0\ \ \ \hbox{in}\ H^4(V_b\times V_b)\ ,\ \ \ \hbox{for\ very\ general\ }b\in B\ .\]             
       
   We now make the following observation:  the assumption that the Chow group $A_0(V_b)_{\QQ}^G$ is trivial (for very general $b\in B$) means that the motive 
   \[ (V_b,\Delta_G,0):=(V_b^\prime, \Delta_{G\times H},0)=(V_b^\prime,\sum_{g\in G\times H} \Gamma_g,0)\ \ \ \in\ \MM_{\rm rat} \]
    is finite-dimensional, in the sense of \cite{Kim}.  As such, the homologically trivial correspondence
    \[      (\Delta_G^- -\gamma)\vert_b \ \ \ \hbox{in}\ A^2_{hom}(V_b\times V_b)_{\QQ}  \]
    is nilpotent, for very general $b\in B$. That is, for a very general $b\in B$ there exists $N_b\in\NN$ such that
    \[   \Bigl( (\Delta_G^- -\gamma)\vert_b\Bigr){}^{\circ N_b} =0\ \ \ \hbox{in}\ A^2_{}(V_b\times V_b)_{\QQ} \ . \]
    What's more, as the nilpotence index $N_b$ is bounded in terms of the Betti numbers of $V_b$, there actually exists one global integer $N\in\NN$ such that
      \[   \Bigl( (\Delta_G^- -\gamma)\vert_b\Bigr){}^{\circ N} =0\ \ \ \hbox{in}\ A^2_{}(V_b\times V_b)_{\QQ}\ ,\ \ \  \hbox{for\ very\ general\ }b\in B\ .\]     
      Developing this expression, one obtains
      \[ ( \Delta_G^{-}\vert_b)^{\circ N} -\gamma^\prime_b=0\ \ \ \hbox{in}\ A^2_{}(V_b\times V_b)_{\QQ}\ ,\ \ \  \hbox{for\ very\ general\ }b\in B\ ,\]
      where the cycle $\gamma^\prime_b$ is a correspondence supported on $D_b\times D_b\subset V_b\times V_b$, and $D_b:=\DD\cap V_b$ is a divisor.
      But the restriction $\Delta_G^{-}\vert_b$ is idempotent (this is easily checked directly), and so
       \[  \Delta_G^{-}\vert_b -\gamma^\prime_b=0\ \ \ \hbox{in}\ A^2_{}(V_b\times V_b)_{\QQ}\ ,\ \ \  \hbox{for\ very\ general\ }b\in B\ .\]
             Applying Voisin's \cite[Proposition 3.7]{V0} once more, one can find a relative cycle $\gamma^{\prime\prime}\in A_3(\VV\times_B \VV)_{\QQ}$ supported on $\DD\times_B \DD$ with the property that
             \[  \Bigl(\Delta_G^{-} -\gamma^{\prime\prime}\Bigr)\vert_b=0\ \ \ \hbox{in}\ A^2_{}(V_b\times V_b)_{\QQ}\ ,\ \ \  \hbox{for\ very\ general\ }b\in B\ .\]
     At this point, we invoke the following result:      
       
       \begin{lemma}\label{voi} Let $M\to B$ be a projective morphism between quasi--projective varieties, and assume $B$ is smooth of dimension $d$. Let $\Gamma\in A_j(M)$. Then the set of points $b\in B$ such that the restriction $\Gamma\vert_{M_b}$ is zero in $A_{j-d}(M_b)$ is a countable union of closed algebraic subsets of $B$.
  \end{lemma}
  
    \begin{proof} This is \cite[Proposition 2.4]{V10}.
   (This result is also stated as \cite[Lemma 3.2]{Vo}, where it is assumed that $M$ is smooth.) 
    
  \end{proof} 

  Applying lemma \ref{voi} to $\Gamma=  \Delta_G^{-} -\gamma^{\prime\prime}$, we find that  
       \[  \Bigl(\Delta_G^{-} -\gamma^{\prime\prime}\Bigr)\vert_b=0\ \ \ \hbox{in}\ A^2_{}(V_b\times V_b)_{\QQ}\ ,\ \ \  \hbox{for\ all\ }b\in B\ .\]
       
    But for any $b\in B$, the restriction $\Delta_G^-\vert_b$ is a projector on $A_0^{hom}(V_b)^G_{\QQ}$. As for the restriction $\gamma^{\prime\prime}\vert_b$, by construction this is a correspondence supported on $D_b\times D_b\subset V_b\times V_b$, where $D_b:=\DD\cap V_b$. Since all irreducible components of $\DD$ are $2$-dimensional varieties dominating the base $B$, $D_b\subset V_b$ is a divisor for any $b\in B$. Because $V_b$ is (by assumption) an Alexander scheme, correspondences act on $A_\ast(V_b)_{\QQ}$. The action of $\gamma^{\prime\prime}\vert_b$ on $A_0^{hom}(V_b)_{\QQ}=A_0^{AJ}(V_b)_{\QQ}$ factors over $A_1^{AJ}(D_b)_{\QQ}=0$. This proves that
    \[ A_0^{hom}(V_b)^G_{\QQ}=0\ \ \ \hbox{for\ all\ }b\in B\ .\]  
       
        \end{proof}

\begin{remark} The proof of proposition \ref{gen} becomes easier when $\VV$ is smooth (in that case, one can use the theory of relative correspondences, as in \cite[Chapter 8]{MNP}).

A result related to proposition \ref{gen} is \cite[Theorem 3.1]{KSZ}. The result of \cite{KSZ} is stronger than proposition \ref{gen}, as there is no condition on the singularities of the fibres.
\end{remark}

For later use, we also note the following:

\begin{lemma}\label{alexquot} Let $V$ be a normal surface that is an Alexander scheme, and let $G\subset\aut(V)$ be a finite abelian group. The quotient $S:=V/G$ is an Alexander scheme, and
  \[ A_i(S)_{\QQ}\cong A_i(V)_{\QQ}^G\ .\]
\end{lemma}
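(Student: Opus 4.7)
The plan is to establish the two assertions in turn: first that $S = V/G$ is an Alexander scheme, then that the quotient map $p \colon V \to S$ induces an isomorphism $A_i(S)_\QQ \cong A_i(V)_\QQ^G$.

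For the first assertion, my strategy is to verify Vistoli's criterion (theorem \ref{vis}) by exhibiting a resolution of $S$ whose exceptional divisor is a union of rational curves. Since $V$ is itself Alexander, theorem \ref{vis} provides a resolution $\pi \colon \tilde V \to V$ with exceptional divisor a union of rational curves. By equivariant resolution of singularities, I may arrange that the $G$-action on $V$ lifts to $\tilde V$ with $G$-stable exceptional divisor. Then $\tilde V/G$ is a normal surface with at worst (abelian) quotient singularities, and its minimal resolution $\rho \colon \tilde S \to \tilde V/G$ is classically known to have exceptional divisor a disjoint union of trees of $\PP^1$'s (the local model being the minimal resolution of a finite subgroup of $GL_2$ acting on $\C^2$). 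The composition $\tilde S \to \tilde V/G \to S$ is then a resolution of $S$, and its exceptional divisor splits into two types of components: those contracted by $\rho$, which are rational by construction, and strict transforms under $\rho$ of components of the exceptional divisor of $\tilde V/G \to S$. The latter are images under the finite quotient map $\tilde V \to \tilde V/G$ of irreducible components of the exceptional divisor of $\pi$, hence quotients of $\PP^1$ by a finite abelian group, hence again rational. So Vistoli's criterion applies to $S$.

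For the Chow group isomorphism, note that $V$ and $S$, being normal surfaces, are Cohen--Macaulay, so the finite surjective morphism $p$ is flat of degree $|G|$ and carries a flat pullback $p^*$. The projection formula yields $p_* \circ p^* = |G|\cdot\ide$ on $A_i(S)_\QQ$, and the base-change identity along the fibre square $V\times_S V \to V$, combined with the fact that (generically, hence with rational coefficients) $V\times_S V$ is the union of the graphs $\Gamma_g$ for $g\in G$, yields $p^* \circ p_* = \sum_{g\in G} g_*$ on $A_i(V)_\QQ$. Both formulas are legitimate for Alexander schemes thanks to proposition \ref{alex}. The first identity shows $p^*$ is injective; the inclusion $p^*(A_i(S)_\QQ)\subseteq A_i(V)_\QQ^G$ is immediate from $g\circ p = p$; and conversely for any $G$-invariant $\alpha$ one has $\alpha = \tfrac{1}{|G|}\sum_g g_*\alpha = p^*\bigl(\tfrac{1}{|G|}p_*\alpha\bigr)$. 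Hence $p^*$ is the desired isomorphism.

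The main obstacle is the careful justification of the identity $p^*\circ p_* = \sum_g g_*$ in the singular setting. For $V$ smooth this is standard, but for a normal Alexander surface one must rely on the Alexander formalism of proposition \ref{alex} to make sense of the composition and to control the scheme structure of $V\times_S V$ along the ramification locus; since we work with rational coefficients and the generic fibre of $V\times_S V\to V$ is the disjoint union of the graphs $\Gamma_g$, the formula nevertheless goes through by a standard transfer-type argument.
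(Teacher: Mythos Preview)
Your overall strategy for the Chow group isomorphism is the same as the paper's---both hinge on the identity $p^\ast p_\ast = \sum_{g\in G} g_\ast$ and then the standard transfer argument---but your justification contains a genuine error. The claim that $p\colon V\to S$ is flat because both surfaces are Cohen--Macaulay is false: miracle flatness requires the \emph{target} to be regular, not merely Cohen--Macaulay. Already for $V=\A^2$ with $\ZZ/2$ acting by $(x,y)\mapsto(-x,-y)$, the quotient map to the quadric cone is not flat (the fibre over the cone point has length $3$, the generic fibre length $2$). So you cannot appeal to flat pullback. The paper instead uses the pullback $p^\ast$ furnished by the Alexander formalism (which exists precisely because $S$ is Alexander), and verifies the key identity by pulling back to a resolution $\tilde V$ of $V$, where it is classical. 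You do gesture towards the Alexander formalism at the end, but you should use it from the start in place of the flat pullback; once you do, your argument goes through.

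For the first assertion, your approach is correct but takes a different route from the paper. The paper simply cites \cite[Proposition 2.11]{Vis}, a general stability result for Alexander schemes under finite quotients. You instead re-derive this in the surface case by hand, via Vistoli's rational-resolution criterion: lift to an equivariant resolution $\tilde V$, resolve the quotient singularities of $\tilde V/G$, and check that every exceptional curve over $S$ is rational (either a resolution curve of a cyclic quotient singularity, or a L\"uroth-type quotient of a rational exceptional curve of $\tilde V\to V$). This works, and has the merit of being self-contained, but the paper's citation is considerably shorter.
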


\begin{proof} (To be sure, this is well--known for smooth varieties $V$.) 

    The first statement follows from \cite[Proposition 2.11]{Vis}.
    
    Let $p\colon V\to S$ be the quotient morphism. Since $S$ is Alexander, there is a well-defined pullback
      \[ p^\ast\colon\ \ A_i(S)\ \to\ A_i(V)\ ,\]
    and there is equality
    \[ p^\ast p_\ast = \sum_{g\in G}\, g_\ast\colon\ \ \ A_i(V)\ \to\ A_i(V)\ .\]
    (To prove this equality, one can use the good functorial properties of Alexander schemes to pull back to a resolution of singularities $\wt{V}$, where the equality is well--known.) In particular,
    \[ p^\ast p_\ast = \ord(G)\cdot\ide\colon\ \ \ A_i(V)^G\ \to\ A_i(V)^G\ ,\]
    which proves the lemma.
      \end{proof}

 \begin{proposition}\label{birat} Let $S_1, S_2$ be normal projective surfaces that are Alexander, and assume $S_1$ and $S_2$ are birational. Then
   \[ A_0(S_1)_{\QQ}\cong A_0(S_2)_{\QQ}\ .\]
   \end{proposition}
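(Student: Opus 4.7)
The plan is to reduce to the classical fact that $A_0(-)_{\QQ}$ is a birational invariant for smooth projective surfaces. By Theorem~\ref{vis}, since $S_1$ and $S_2$ are Alexander, they admit resolutions of singularities $\pi_i\colon \wt{S}_i\to S_i$ whose exceptional divisors are unions of rational curves. The smooth projective surfaces $\wt{S}_1$ and $\wt{S}_2$ are birational to each other, so $A_0(\wt{S}_1)_{\QQ}\cong A_0(\wt{S}_2)_{\QQ}$: any birational map between smooth projective surfaces factors through a chain of blow-ups at points, and each such blow-up preserves $A_0$.

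It therefore suffices to show that for each $i$ the push-forward
\[ \pi_{i,*}\colon\ A_0(\wt{S}_i)_{\QQ}\ \longrightarrow\ A_0(S_i)_{\QQ} \]
is an isomorphism. Dropping the subscript $i$: since $S$ is Alexander, Proposition~\ref{alex} supplies a well-defined pull-back $\pi^*\colon A_0(S)_{\QQ}\to A_0(\wt{S})_{\QQ}$, and the identity $\pi_*\pi^*=\deg(\pi)\cdot\ide=\ide$ (birationality of $\pi$) already shows that $\pi_*$ is split surjective. For injectivity, I would check that $\pi^*\pi_*=\ide$ on $A_0(\wt{S})_{\QQ}$.

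To verify this, split $\alpha\in A_0(\wt{S})_{\QQ}$ as $\alpha_{sm}+\alpha_E$, where $\alpha_{sm}$ is supported over the smooth locus of $S$ (so $\pi^*\pi_*\alpha_{sm}=\alpha_{sm}$ since $\pi$ is an isomorphism there) and $\alpha_E$ is supported on the exceptional divisor. For any singular point $q\in S$, the fiber $\pi^{-1}(q)$ is a connected union of rational curves (connectedness by Zariski's Main Theorem applied to the normal surface $S$), hence $A_0(\pi^{-1}(q))_{\QQ}\cong\QQ$, generated by $[p_q]$ for any choice of $p_q\in\pi^{-1}(q)$. Writing $\alpha_E=\sum_q n_q[p_q]$ in $A_0(\wt{S})_{\QQ}$, one has $\pi_*\alpha_E=\sum_q n_q[q]$ and $\pi^*[q]=[p_q]$, giving $\pi^*\pi_*\alpha_E=\alpha_E$.

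The main technical subtlety is to confirm that Vistoli's pull-back $\pi^*$ for Alexander schemes sends a point class $[q]$ (at a singular $q$) to a class supported on the exceptional fiber $\pi^{-1}(q)$; once this is in place, the one-dimensionality of $A_0(\pi^{-1}(q))_{\QQ}$ forced by the rational-curves hypothesis pins down $\pi^*[q]=[p_q]$ and completes the argument.
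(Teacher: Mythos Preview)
Your overall strategy matches the paper's: pass to resolutions $\wt{S}_i\to S_i$, use birational invariance of $A_0$ for smooth projective surfaces, and then argue that $A_0(\wt{S}_i)_{\QQ}\cong A_0(S_i)_{\QQ}$. The difference lies in how this last isomorphism is established. The paper does not use the Alexander pull-back $\pi^\ast$ at all; instead it invokes Kimura's bivariant theory \cite{Kim-1}: for an Alexander surface one has $A^2(S)_{\QQ}\cong A_0(S)_{\QQ}$ (operational Chow cohomology), and Kimura's exact sequences give directly $A^2(S)_{\QQ}\cong A^2(\wt{S})_{\QQ}$. This sidesteps entirely the issue you flag at the end.

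Your hands-on argument via $\pi^\ast\pi_\ast=\ide$ is in principle fine, and the subtlety you isolate can be handled (Vistoli's pull-back is compatible with base change, so $\pi^\ast[q]$ is indeed the push-forward of a class in $A_0(\pi^{-1}(q))_{\QQ}\cong\QQ$, pinned down by $\pi_\ast\pi^\ast=\ide$). But note that you could also avoid the Alexander pull-back for this step altogether: surjectivity of $\pi_\ast$ is immediate, and for injectivity one can use the localization sequences for $E\subset\wt{S}$ and $\mathrm{Sing}(S)\subset S$ over the common open $U$, together with the fact (which you already use) that $\pi_\ast\colon A_0(E)_{\QQ}\to A_0(\mathrm{Sing}(S))_{\QQ}$ is an isomorphism since each exceptional fibre is a connected tree of rational curves. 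The paper's route through \cite{Kim-1} is the most economical way to package this, which is presumably why it was chosen.
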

   
   \begin{proof} This is probably well-known. The point is that there are isomorphisms
     \[ A^2(S_j)_{\QQ}\ \xrightarrow{\cong}\ A_0(S_j)_{\QQ}\ \ \ (j=1,2)\ ,\]
     where $A^2()$ denotes operational Chow cohomology \cite{Kim-1}. Let $\wt{S_j}\to S_j$ denote resolutions of singularities. Then the exact sequences of \cite{Kim-1} imply that there are isomorphisms 
     \[ A^2(S_j)_{\QQ}\ \xrightarrow{\cong}\ A^2(\wt{S_j})_{\QQ}\ \ \ (j=1,2)\ .\]
     But the smooth surfaces $\wt{S_1}$ and $\wt{S_2}$ are birational, and so
     \[   A^2(\wt{S_1})_{\QQ}\ \xrightarrow{\cong}\ A^2(\wt{S_2})_{\QQ}  \ .\] 
   \end{proof}

\subsection{A weak nilpotence result}

A particular case of Kimura's finite-dimensionality conjecture \cite{Kim} is the conjecture that a numerically trivial self-correspondence of degree $0$ is nilpotent. The following inconditional result establishes a weak version of this conjecture:

\begin{proposition}[Voevodsky \cite{Voe}, Voisin \cite{V9}]\label{weaknilp} Let $X$ be a smooth projective variety of dimension $n$. Let $\Gamma\in A^n_{alg}(X\times X)$. Then $\Gamma$ is nilpotent.
\end{proposition}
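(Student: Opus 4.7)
The plan is to prove the stronger statement that $\Gamma$ is \emph{smash-nilpotent}: there exists $N$ such that $\Gamma^{\boxtimes N}=0$ in $A^{nN}((X\times X)^N)_{\QQ}$. This implies nilpotence under composition, because $\Gamma^{\circ N}$ can be recovered from $\Gamma^{\boxtimes N}\in A^{nN}(X^{2N})$ by pulling back along the ``string of diagonals'' $\iota\colon X^{N+1}\to X^{2N}$ that identifies the $2i$-th and $(2i+1)$-st factors for $i=1,\dots,N-1$, followed by projection onto the two outer factors. Hence the vanishing of $\Gamma^{\boxtimes N}$ forces the vanishing of $\Gamma^{\circ N}$.

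To exhibit smash-nilpotence, I would first use the standard description of algebraic equivalence to write
\[ \Gamma = Z_{c_1}-Z_{c_0}\ \ \ \text{in}\ A^n(X\times X) \]
for some smooth projective connected curve $C$, points $c_0,c_1\in C$, and cycle $Z\in A^n(C\times X\times X)$. Taking $N$-fold exterior products and applying the projection formula,
\[ \Gamma^{\boxtimes N} = \bigl(p_{(X\times X)^N}\bigr)_{\ast}\Bigl(Z^{\boxtimes N}\cdot p_{C^N}^{\ast}\bigl(([c_1]-[c_0])^{\boxtimes N}\bigr)\Bigr)\ . \]
Everything therefore reduces to the following key lemma: for $N$ sufficiently large,
\[ ([c_1]-[c_0])^{\boxtimes N}=0\ \ \ \text{in}\ A_0(C^N)_{\QQ}\ . \]

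The lemma I would prove by transferring to the Jacobian. The cycle is $S_N$-invariant, and the pushforward $\pi_{\ast}$ along $\pi\colon C^N\to\operatorname{Sym}^N C$ gives an isomorphism $A_0(C^N)_{\QQ}^{S_N}\xrightarrow{\cong} A_0(\operatorname{Sym}^N C)_{\QQ}$ (with inverse $\tfrac{1}{N!}\pi^{\ast}$, using that $\operatorname{Sym}^N C$ is smooth). For $N\ge 2g(C)-1$, Riemann--Roch makes the Abel--Jacobi morphism $\operatorname{Sym}^N C\to J(C)$ a projective bundle, so the projective bundle formula yields $A_0(\operatorname{Sym}^N C)_{\QQ}\cong A_0(J(C))_{\QQ}$. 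Under this chain of identifications, $([c_1]-[c_0])^{\boxtimes N}$ is matched (up to a nonzero rational scalar) with the Pontryagin power $\bigl(\iota_{\ast}([c_1]-[c_0])\bigr)^{\star N}$ on $J(C)$, where $\iota\colon C\to J(C)$ is the Abel--Jacobi embedding. By Beauville's decomposition
\[ A_0(J(C))_{\QQ} = \bigoplus_{s=0}^{g(C)}A_0(J(C))_{\QQ,(s)}\ , \]
in which $A_{\QQ,(s)}\star A_{\QQ,(t)}\subset A_{\QQ,(s+t)}$ and in which the degree-zero class $\iota_{\ast}([c_1]-[c_0])$ lives in $\bigoplus_{s\ge 1}$, the $N$-th Pontryagin power lies in $\bigoplus_{s\ge N}$, which is $0$ as soon as $N>g(C)$.

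The main obstacle is the key lemma itself. Its proof rests on Beauville's weight decomposition of $0$-cycles on abelian varieties, which is a non-trivial structural input, and on carefully propagating the vanishing back from $A_0(J(C))_{\QQ}$ through $A_0(\operatorname{Sym}^N C)_{\QQ}$ to $A_0(C^N)_{\QQ}$. The first step uses the projective bundle formula (available only for $N\gg 0$), and the second uses the identification $A_0(\operatorname{Sym}^N C)_{\QQ}\cong A_0(C^N)_{\QQ}^{S_N}$.
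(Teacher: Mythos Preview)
Your argument is correct and is essentially Voevodsky's original proof of smash-nilpotence for cycles algebraically equivalent to zero; the paper itself does not give a proof but simply cites \cite{Voe}, \cite[2.3.1]{V9} and \cite[Theorem 3.25]{Vo}. Two minor remarks: the reduction to a \emph{single} parametrizing curve $C$ (rather than a finite sum of such) deserves a word of justification, and the vanishing of the $(g{+}1)$-st Pontryagin power of a degree-zero $0$-cycle on $J(C)$ is originally due to Bloch and predates Beauville's full eigenspace decomposition, so you could cite it directly if you prefer a lighter reference.
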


\begin{proof} This is \cite{Voe} or \cite[2.3.1]{V9} or \cite[Theorem 3.25]{Vo}.

\end{proof}

\subsection{Lawson homology}

\begin{theorem}[Friedlander, Friedlander--Mazur \cite{Fr}, \cite{FM}]\label{laws} There exists a bigraded homology theory $L_\ast H_\ast()$ of $\QQ$--vector spaces, called {\em Lawson homology\/}, with the following properties:

\noindent
(\rom1) the assignment $L_\ast H_\ast()$ is part of a Poincar\'e duality theory (in the sense of \cite{BO});

\noindent
(\rom2) if $P\to M$ is a projective bundle with fibres $\PP^r$, there is a functorial isomorphism
       \[  L_j H_i(P)\cong \bigoplus_{k=0}^{r} L_{j-k} H_{i-2k}(M)\ ;\]
       
    \noindent
    (\rom3) there exist functorial maps
     \[ L_j H_i()\ \to\ L_{j-1} H_i()\ ;\]
     
   \noindent
   (\rom4) for any variety $M$, we have
        \[   L_j H_i(M) = \begin{cases}   B_j(M)_{\QQ} & \hbox{if}\ i=2j\ ;\\
                                   H_i(M) & \hbox{if}\ j=0\ .\\
                                  \end{cases}      \]  

\end{theorem}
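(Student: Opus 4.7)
The plan is to treat this as a compilation theorem, assembling each of the four stated properties from the foundational literature on Lawson homology. Recall the construction: for a projective variety $M$, one forms the Chow monoid of effective algebraic $j$-cycles on $M$, topologizes it using the Chow variety, and takes its naive group completion $\mathcal{Z}_j(M)$. One then defines
\[ L_j H_i(M) := \pi_{i-2j}\bigl(\mathcal{Z}_j(M)\bigr)\otimes_{\ZZ}\QQ \quad (i\ge 2j).\]
For quasi-projective $M$, one extends the definition via cycles modulo those supported on a compactification boundary, giving a well-defined theory independent of choices.

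For property (\rom1), the verification of the Bloch--Ogus axioms --- functorial flat pullbacks, proper pushforwards, cycle class maps, localization sequences for closed/open decompositions, and the duality isomorphism --- is the main content of Friedlander's work \cite{Fr} and Friedlander--Mazur \cite{FM}; I would cite these directly rather than reproduce the axiomatic check. For property (\rom2), the projective bundle formula follows from the Lawson suspension theorem applied fibrewise to $P=\PP(E)\to M$: the successive suspensions of the zero-section yield the Tate-twisted direct sum decomposition as displayed.

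For property (\rom3), the functorial $s$-maps arise from the natural continuous group homomorphism
\[ s\colon\ \mathcal{Z}_j(M)\ \to\ \Omega^{2}\mathcal{Z}_{j-1}(M),\]
constructed via intersection with a hyperplane (equivalently, a join construction) as in \cite{FM}; passing to the $(i-2j)$-th homotopy groups gives the desired maps $L_j H_i(M)\to L_{j-1}H_i(M)$. For property (\rom4), the case $i=2j$ reduces to
\[ L_j H_{2j}(M)=\pi_0(\mathcal{Z}_j(M))\otimes\QQ=B_j(M)_{\QQ},\]
since, by construction of the Chow variety topology, path components of $\mathcal{Z}_j(M)$ are precisely algebraic equivalence classes of $j$-cycles. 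The case $j=0$ is the Dold--Thom theorem: $\mathcal{Z}_0(M)$ is weakly equivalent to a product of Eilenberg--MacLane spaces whose homotopy recovers $H_i^{BM}(M,\QQ)$.

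Since none of the four assertions requires fresh mathematical input, the principal task --- more a bookkeeping issue than a genuine obstacle --- is reconciling conventions across \cite{Fr}, \cite{FM}, and \cite{BO}: integral versus rational coefficients, the projective versus quasi-projective framework, and the precise normalization of Borel--Moore homology used in the identification $L_0H_i(M)=H_i(M)$. Once these conventions are aligned the theorem as stated follows verbatim from the cited sources.
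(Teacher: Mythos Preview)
Your proposal is correct and takes essentially the same approach as the paper: both treat the theorem as a compilation from the literature, with the paper's proof being even terser (three bare citations). The one noteworthy discrepancy is property (\rom2): the paper cites \cite[Proposition 3.1]{Hu0} for the projective bundle formula, whereas your appeal to ``Lawson suspension applied fibrewise'' is a bit loose for a non-trivial bundle $\PP(E)\to M$ --- suspension handles the trivial case, but the general statement needs the extra input recorded in Hu's paper.
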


\begin{proof} Point (\rom1) is contained in \cite{Fr}. Point (\rom2) is \cite[Proposition 3.1]{Hu0}. The last two points are in \cite{FM}.

\end{proof}

\subsection{Weak and strong property}

\begin{definition}\label{ws} Let $X$ be a quasi--projective variety, and $m\in\NN$. We say that $X$ has the {\em weak property in degree $m$\/} if the natural maps
  \[ L_j H_{2j}(X)\ \to\ H_{2j}(X) \]
  are injective for $j\ge m$ (i.e., $B_j^{hom}(X)=0$ for $j\ge m$).
  
  We say that $X$ has the {\em strong property in degree $m$\/} if $X$ has the weak property in degree $m$, and in addition the natural maps
  \[ L_j H_{2j+1}(X)\ \to\ H_{2j+1}(X) \]
  are surjective for $j\ge m$.
\end{definition}

\begin{lemma}\label{paste} Let $X$ be a quasi--projective variety, and $Y\subset X$ a closed subvariety with complement $U:=X\setminus Y$. Assume that $U$ has the strong property in degree $m$, and $Y$ has the weak property in degree $m$. Then $X$ has the weak property in degree $m$.
\end{lemma}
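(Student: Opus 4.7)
The plan is to compare the localization long exact sequences associated to the closed inclusion $Y \hookrightarrow X$ (with open complement $U$) in Lawson homology and in Borel--Moore homology, and then run a four-lemma diagram chase. Since Lawson homology is part of a Poincar\'e duality theory in the sense of Bloch--Ogus (Theorem \ref{laws}(\rom1)), so is Borel--Moore homology, and the natural comparison map $L_j H_i(-) \to H_i(-)$ --- obtained by iterating Theorem \ref{laws}(\rom3) down to $L_0 H_i = H_i$ (Theorem \ref{laws}(\rom4)) --- is a morphism of Poincar\'e duality theories; in particular it commutes with the localization boundaries.

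Concretely, for each fixed $j\ge m$ I would assemble the commutative diagram with exact rows
\[
\begin{array}{ccccccc}
L_j H_{2j+1}(U) & \to & L_j H_{2j}(Y) & \to & L_j H_{2j}(X) & \to & L_j H_{2j}(U) \\
\downarrow & & \downarrow & & \downarrow & & \downarrow \\
H_{2j+1}(U) & \to & H_{2j}(Y) & \to & H_{2j}(X) & \to & H_{2j}(U)\ .
\end{array}
\]
By hypothesis the leftmost vertical is surjective (strong property of $U$ in degree $j$), the rightmost is injective (weak property of $U$, implied by the strong one), and the middle-left is injective (weak property of $Y$ in degree $j$). A routine four-lemma then forces the middle-right vertical $L_j H_{2j}(X)\to H_{2j}(X)$ to be injective: given $\alpha$ in its kernel, push it to the right to lift to some $\beta\in L_j H_{2j}(Y)$ (using injectivity on $U$), then use exactness of the bottom row together with surjectivity of $L_j H_{2j+1}(U)\to H_{2j+1}(U)$ and injectivity of $L_j H_{2j}(Y)\to H_{2j}(Y)$ to rewrite $\beta$ as the image of a class in $L_j H_{2j+1}(U)$, whence $\alpha=0$. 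This gives the weak property of $X$ in degree $j$.

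I do not anticipate a serious obstacle. The only non-formal input is the existence of a Lawson-homology localization long exact sequence compatible with the Borel--Moore one under the comparison map, but this is built into the BO-theory structure quoted in Theorem \ref{laws}(\rom1) together with the functoriality in Theorem \ref{laws}(\rom3); if needed one can trace it back to the cofibre sequences of spectra used by Friedlander--Mazur to construct Lawson homology in the first place.
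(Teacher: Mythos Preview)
Your argument is correct and essentially identical to the paper's: both set up the commutative ladder of localization long exact sequences for Lawson and Borel--Moore homology and run the same four-lemma chase, using surjectivity of $L_j H_{2j+1}(U)\to H_{2j+1}(U)$ and injectivity on $Y$ and $U$ to force injectivity on $X$. The paper displays one extra column ($L_j H_{2j+1}(X)\to H_{2j+1}(X)$) that it does not actually use, so your four-column version is in fact the cleaner presentation.
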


\begin{proof} The good functorial properties of Lawson homology ensure the existence of a commutative diagram with exact rows
  \[\begin{array}[c]{cccccccccc}
     L_j H_{2j+1}(X) &\to&   L_j H_{2j+1}(U) &\to& L_j H_{2j}(Y) &\to& L_j H_{2j}(X)&\to& L_j H_{2j}(U) &\to 0\\
    \ \ \downarrow{\scriptstyle \nu_0} && \ \  \downarrow{\scriptstyle \nu_1} &&\ \ \downarrow{\scriptstyle \nu_2}&&\ \ \downarrow{\scriptstyle \nu_3}&&\ \ \downarrow{\scriptstyle \nu_4}&\\
       H_{2j+1}(X) &\to&      H_{2j+1}(U) &\to&  H_{2j}(Y) &\to&  H_{2j}(X)&\to&  H_{2j}(U) &\to \\
        \end{array}\]

Consider now a degree $j\ge m$. By assumption, the arrow labelled $\nu_1$ is surjective, and the arrows labelled $\nu_2, \nu_4$ are injective. An easy diagram chase then shows that the arrow labelled $\nu_3$ is injective, i.e. $X$ has the weak property in degree $m$.

\end{proof}

\begin{lemma}\label{cut}  Let $X$ be a quasi--projective variety, and $Y\subset X$ a closed subvariety of dimension $d$ with complement $U:=X\setminus Y$. Assume that $X$ has the strong property in degree $d$. Then $U$ has the strong property in degree $d$.

\end{lemma}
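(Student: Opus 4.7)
The plan is to perform a diagram chase on the same commutative diagram with exact rows that appears in the proof of Lemma \ref{paste}. The hypothesis that $X$ has the strong property in degree $d$ supplies, for each $j \geq d$, surjectivity of $\nu_0\colon L_j H_{2j+1}(X) \to H_{2j+1}(X)$ and injectivity of $\nu_3\colon L_j H_{2j}(X) \to H_{2j}(X)$; the task is to deduce injectivity of $\nu_4$ and surjectivity of $\nu_1$ in the same range.

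The key observation is that because $Y$ is (by the paper's convention) equidimensional of dimension exactly $d$, the comparison map
\[ \nu_2\colon L_j H_{2j}(Y) \to H_{2j}(Y) \]
is in fact an \emph{isomorphism} for every $j \geq d$, not merely an injection. Indeed, for $j > d$ both sides vanish by dimension reasons; for $j = d$, both $L_d H_{2d}(Y) = B_d(Y)_{\QQ}$ and $H_{2d}(Y)$ are free $\QQ$-modules on the (top-dimensional) irreducible components of $Y$, with $\nu_2$ sending each component to its fundamental class.

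Injectivity of $\nu_4$ for $j \geq d$ then follows by a standard chase: given $\alpha \in \ker(\nu_4)$, lift it to $\beta \in L_j H_{2j}(X)$ using right-exactness of the top row; exactness of the bottom row at $H_{2j}(X)$ produces $\gamma \in H_{2j}(Y)$ whose image in $H_{2j}(X)$ equals $\nu_3(\beta)$; pull $\gamma$ back to $\delta \in L_j H_{2j}(Y)$ via $\nu_2^{-1}$, and conclude from the injectivity of $\nu_3$ that $\beta$ already comes from $L_j H_{2j}(Y)$, whence $\alpha = 0$. Dually, for surjectivity of $\nu_1$: given $\alpha \in H_{2j+1}(U)$, transport its boundary in $H_{2j}(Y)$ back through $\nu_2^{-1}$ and, using exactness of the top row together with the injectivity of $\nu_3$, lift the result to $\epsilon \in L_j H_{2j+1}(U)$; the difference $\alpha - \nu_1(\epsilon)$ then lies in the image of $H_{2j+1}(X) \to H_{2j+1}(U)$, and the strong property for $X$ (i.e. surjectivity of $\nu_0$) allows this residual term to be pulled back to $L_j H_{2j+1}(X)$, completing the argument.

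The substantive point (rather than a real obstacle) is the promotion of $\nu_2$ from merely injective, as it would be under an assumption of a weak property for $Y$, to a genuine bijection in the range $j \geq d$: once this is in hand, both halves of the strong property for $U$ drop out from exactly the symmetric portions of the chase already carried out in Lemma \ref{paste}.
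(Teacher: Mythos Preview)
Your proposal is correct and follows exactly the approach the paper indicates: the paper's own proof consists of a single sentence pointing to the same diagram from Lemma~\ref{paste} and noting that ``the crux [is] that the arrow $\nu_2$ is an isomorphism for $j\ge d$,'' which is precisely the key observation you isolate and exploit. Your explicit diagram chases for the injectivity of $\nu_4$ and the surjectivity of $\nu_1$ simply spell out what the paper leaves implicit.
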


\begin{proof} This is proven by doing another diagram chase in the commutative diagram of the proof of lemma \ref{paste}, the crux being that the arrow $\nu_2$ is an isomorphism for $j\ge d$.

\end{proof}

 \begin{lemma}\label{projbun} Let $P\to M$ be a projective bundle, with fibres isomorphic to $\PP^r$. Assume that $M$ has the weak (resp. strong) property in degree $m$. Then $P$ has the weak (resp. strong) property in degree $m+r$.
  \end{lemma}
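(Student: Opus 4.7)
The plan is to deduce the lemma formally from the projective bundle formula of Theorem \ref{laws}(\rom2). Specializing that formula to $i=2j$ and to $i=2j+1$ respectively yields isomorphisms
\[ L_j H_{2j}(P)\ \cong\ \bigoplus_{k=0}^r L_{j-k} H_{2(j-k)}(M),\qquad L_j H_{2j+1}(P)\ \cong\ \bigoplus_{k=0}^r L_{j-k} H_{2(j-k)+1}(M)\,. \]
For $j\ge m+r$ and $k\in\{0,\ldots,r\}$ we have $j-k\ge m$, so every summand sits in the range where the weak (resp. strong) property hypothesis on $M$ applies.

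First I would note the analogous projective bundle decomposition for Borel--Moore homology; this is either the classical PBF, or a special case of Theorem \ref{laws}(\rom2) obtained by combining the comparison maps of (\rom3) with the identification $L_0 H_\ast=H_\ast$ from (\rom4). The key point is that the isomorphism in (\rom2) is built by pulling back along $P\to M$ and capping with powers of the class of $\OO_P(1)$, and these operations commute with the comparison maps $L_\ast H_\ast\to H_\ast$. Hence one obtains a commutative square
\[ \begin{array}{ccc} L_j H_i(P) & \xrightarrow{\cong} & \bigoplus_{k=0}^r L_{j-k} H_{i-2k}(M) \\ \downarrow & & \downarrow \\ H_i(P) & \xrightarrow{\cong} & \bigoplus_{k=0}^r H_{i-2k}(M) \end{array} \]
in which the right vertical arrow is the direct sum of the component-wise comparison maps.

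With this square in hand the proof is purely formal. Take $i=2j$ with $j\ge m+r$: each component map $L_{j-k} H_{2(j-k)}(M)\to H_{2(j-k)}(M)$ is injective by the weak-property hypothesis (since $j-k\ge m$), and a direct sum of injections is injective; hence $L_j H_{2j}(P)\to H_{2j}(P)$ is injective, which gives $P$ the weak property in degree $m+r$. For the strong version one additionally takes $i=2j+1$: each component $L_{j-k} H_{2(j-k)+1}(M)\to H_{2(j-k)+1}(M)$ is now surjective, and a direct sum of surjections is surjective, yielding the remaining condition for the strong property of $P$ in degree $m+r$.

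The only non-routine point is the compatibility of the Lawson projective bundle isomorphism with the comparison maps to Borel--Moore homology, i.e.\ the commutativity of the square above; this I expect to be the main thing to spell out, but it is built into the construction of the isomorphism in Theorem \ref{laws}(\rom2) given in \cite{Hu0}, where both sides are assembled from the same universal hyperplane class via Lawson-homological operations that are natural under the comparison maps of (\rom3).
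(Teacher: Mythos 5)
Your proof is correct and is exactly the argument the paper has in mind: the paper's own proof is the one-line ``Immediate from theorem \ref{laws}(\rom2)'', and your write-up simply makes explicit the index bookkeeping ($j-k\ge m$ for $k\le r$ when $j\ge m+r$) and the compatibility of the projective bundle isomorphism with the comparison maps to Borel--Moore homology. Nothing further is needed.
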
 
  
  \begin{proof} Immediate from theorem \ref{laws}(\rom2).
  
   \end{proof}

\begin{remark} Definition \ref{ws} and lemmas \ref{paste} and \ref{cut} are directly inspired by work of Totaro \cite{Tot}, where a similar notion is used (with higher Chow groups instead of Lawson homology) to prove that linear varieties have trivial Chow groups. Subsequently, in \cite{moit} I used Totaro's notion to prove triviality of a certain Chow group.

\end{remark}
 
\section{Construction of the surfaces}

We rely on the following explicit construction of Campedelli surfaces with maximal torsion of the algebraic fundamental group $G=\pi_1^{alg}(S)$:

\begin{theorem}[Mendes Lopes--Pardini \cite{MLP}]\label{const} Let $S$ be a numerical Campedelli surface with $\ord (G)=9$. The canonical model $\bar{S}$ of $S$ is one of the following:

\noindent
(\rom1) (``Type A'':)  $\bar{S}=V/G$, where $G=\ZZ_9$ and $V\subset W$ is a $G$-invariant divisor in the linear system $\vert 3H\vert$ on $W:=\PP^1\times\PP^1\times\PP^1$. Here $H$ is ${\mathcal O}_W(1,1,1)$ and $G$ acts on $W$ by
  \[   g(x,y,z):= (y,z,\nu x) \ ,\]
  where $x:=x_1/x_0, y:=y_1/y_0, z:=z_1/z_0$ are affine coordinates, and $\nu$ is a primitive $3$rd root of unity. The divisor $V$ is a surface 
  with at most rational double points, not containing the fixed points of $g^3$. Conversely, any such $V/G$ is the canonical model of a numerical Campedelli surface with $G=\ZZ_9$.
    
  \noindent
  (\rom2) (``Type B1'':) $\bar{S}=V/G$, where $G=\ZZ_3^2$ and $V\subset W$ is a $G$-invariant divisor in the linear system $\vert 3H\vert$ on the flag variety
    \[ W:=\{ x_0y_0 +x_1 y_1 + x_2 y_2=0 \}\ \ \ \subset\ \PP^2\times (\PP^2)^\ast\ .\]
    Here $H:={\mathcal O}_{\PP^2\times\PP^2\ast}(1,1)\vert_W$, and the $G$--action on $W$ is induced by the $G$--action on $\PP^2$ given by
      \[ g_1(x_0,x_1,x_2):=(x_0,\nu x_1,\nu^2 x_2)\ ,\ \ \ g_2(x_0,x_1,x_2):=(x_1,x_2,x_0)\ .\]
      The divisor $V$ is a surface 
  with at most rational double points, not containing any point fixed by a non-trivial element of $G$. Conversely, any such $V/G$ is the canonical model of a numerical Campedelli surface with $G=\ZZ_3^2$.
  
  \noindent
  (\rom3) (``Type B2'':) $\bar{S}=V/G$, where $G=\ZZ_3^2$ and $V$ is a surface with rational double point singularities. Moreover, $\bar{S}$ is a member of a family of surfaces $\Ss\to B$, and the general member of the family is (isomorphic to) a surface $\bar{S}$ which is as in (\rom2) and which is a quotient surface. 
  \end{theorem}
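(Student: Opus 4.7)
The plan is to start from an arbitrary numerical Campedelli surface $S$ with $\ord(G)=9$, pass to its canonical model $\bar S$, and analyze the étale cover $V\to\bar S$ corresponding to $G=\pi_1^{alg}(S)$. By multiplicativity of Euler characteristic and $K^2$ under étale covers, $\chi(\mathcal O_V)=9$ and $K_V^2=18$. Since $V\to\bar S$ is étale and $q(\bar S)=0$, one checks $q(V)=0$ (e.g.\ by the Albanese or by Beauville's inequality bounding irregularity of covers), so $p_g(V)=8$. In particular $K_V^2=2p_g(V)-2$, placing $V$ on the Castelnuovo/Horikawa boundary line $K^2=2\chi-2$; a Horikawa-type analysis then applies to the canonical map $\varphi_{|K_V|}\colon V\to\PP^7$.

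Next, by a classical theorem of Reid on fundamental groups of numerical Campedelli surfaces, $G$ must be abelian of order $9$, hence either $\ZZ_9$ or $\ZZ_3^2$. The crucial input is the $G$-isotypic decomposition of $H^0(V,K_V)\cong\C^8$: since $p_g(\bar S)=0$, the trivial character of $G$ does not appear, and by Riemann--Roch applied to each twisted invertible sheaf on $\bar S$ one shows that every non-trivial character of $G$ occurs with multiplicity exactly $1$. This identifies $H^0(V,K_V)$ as the regular representation minus the trivial summand.

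For Type A ($G=\ZZ_9$), one groups the $8$ non-trivial characters of $\ZZ_9$ into the three orbits $\{\chi^i,\chi^{2i},\chi^{4i}\}$ under the induced $\ZZ_3$-symmetry $\chi\mapsto\chi^2$. Each orbit contributes a $2$-dimensional sub-system giving an equivariant map $V\to\PP^1$, and together they produce a $G$-equivariant map $V\to(\PP^1)^3$ where $G$ permutes the three factors cyclically and scales by a primitive $3$rd root $\nu$. A dimension/degree computation via $K_V=(H_1+H_2+H_3)|_V$ and $K_V^2=18=3\cdot 6$ forces the image to be a divisor in $|3H|$. For Type B1 ($G=\ZZ_3^2$), one picks generators $g_1,g_2$; the eigenspaces of $g_1$ on $H^0(K_V)$ have dimensions $3,3,2$ (the trivial eigenspace having dimension $2$ corresponds to sections descending to the $\ZZ_3$-quotient), and the orbit structure of $g_2$ on these packages the map into $V\to\PP^2\times(\PP^2)^*$, landing in the incidence hypersurface $W$ after one checks the $G$-equivariant Euler-sequence relation $\sum x_iy_i=0$. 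A second adjunction/degree calculation gives $V\in|3H|$ on $W$. The conditions on singularities (at most rational double points, no fixed points of the relevant subgroup) are equivalent to $V/G$ having only rational double points, which is the requirement that $V/G$ be the canonical model of a smooth surface.

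The converse direction---that every such $V/G$ is the canonical model of a numerical Campedelli surface---is a direct calculation: one verifies $K_{V/G}^2=2$, $p_g=q=0$ and that $\pi_1^{alg}$ equals $G$ using that the $G$-action is free in codimension $1$. Finally, the Type B2 component arises because the Hilbert-scheme parameter space of all admissible pairs $(V,G)$ of Type B type is a single irreducible family whose general member is a quotient surface (Type B1), but whose boundary contains surfaces $V$ on which $G$ no longer acts freely enough to be a genuine quotient presentation---these are the Type B2 surfaces, obtained by specialization. The main obstacle, and the technical heart of the argument, is ruling out the hyperelliptic case of Horikawa's dichotomy for the canonical map of $V$ and showing that the resulting $G$-equivariant rational map from $V$ is actually a closed embedding as a divisor of the prescribed class, which amounts to showing that the canonical ring of $V$ is generated in degree $1$ modulo the defining equation of the ambient threefold.
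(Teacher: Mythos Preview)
The paper itself does not prove this theorem: its ``proof'' consists of pointers to \cite[Theorem 3.1, Proposition 3.12, Proposition 2.3]{MLP}, plus a short remark explaining why in case (\rom3) the general member of the pencil is a quotient variety of type B1. So what you are really attempting is to reconstruct the argument of Mendes Lopes--Pardini. Your overall strategy---pass to the canonical cover $V\to\bar S$, compute its invariants, and study the $G$-equivariant canonical map of $V$---is indeed the approach of \cite{MLP}, and your character count (each non-trivial character of $G$ occurs exactly once in $H^0(V,K_V)$) is correct.

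However, there is a genuine numerical gap. You assert $K_V^2=2p_g(V)-2$ and that $V$ lies on the ``Castelnuovo/Horikawa boundary line $K^2=2\chi-2$'', and you base the rest of the analysis on this. But $K_V^2=9\cdot K_{\bar S}^2=18$, while $2p_g(V)-2=14$ and $2\chi(\mathcal O_V)-2=16$; neither equals $18$. So $V$ is \emph{not} a Horikawa surface, and no off-the-shelf Horikawa dichotomy applies. In \cite{MLP} the identification of $V$ as a divisor in $|3H|$ on $W$ comes instead from a direct study of the $G$-module $H^0(V,K_V)$ and of the canonical ring, not from Horikawa's classification. Relatedly, your Type A construction is garbled: the map $\chi\mapsto\chi^2$ on characters of $\ZZ_9$ has order $6$ in $(\ZZ/9)^\times$, giving orbits of sizes $6$ and $2$, not three orbits of size $3$; and the three projections $V\to\PP^1$ are given by sections of $\mathcal O_W(1,0,0)|_V$, $\mathcal O_W(0,1,0)|_V$, $\mathcal O_W(0,0,1)|_V$, not by $2$-dimensional subsystems of $|K_V|=|\mathcal O_W(1,1,1)|_V|$. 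These are the steps that would need to be redone to turn your sketch into an actual proof.
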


\begin{proof} The three cases correspond to what is called ``type A'', resp. ``type B1'', resp. ``type B2'' in \cite{MLP}.
The fact that $\bar{S}$ is a surface of one of these three types is \cite[Theorem 3.1]{MLP}, combined with \cite[Proposition 3.12]{MLP} for the description of type B2 given in (\rom3). To see that the general element of the family in case (\rom3) is a quotient variety, one proceeds as follows: given $S$ such that $\bar{S}=V_0/G$ is of type B2, the construction of \cite[Proposition 3.12]{MLP} allows one to take a pencil spanned by $V_0$ and some smooth $V_1$, where $V_1$ is such that the quotient $V_1/G$ is of type B1. The general element of the pencil $\VV_b/G$ will be a quotient variety of type B1, as requested.

The fact that any $V/G$ as in (\rom1) or (\rom2) is the canonical model of a numerical Campedelli surface is \cite[Proposition 2.3]{MLP}.

%
  
\end{proof}

\begin{remark} The moduli dimension is $6$ in case (\rom1), $7$ in case (\rom2), and $6$ in case (\rom3) \cite[Section 3]{MLP}. Surfaces as in (\rom1) form an irreducible component of the moduli space. Surfaces as in (\rom2) and (\rom3) are in the same ($7$-dimensional) irreducible component of the moduli space \cite[Theorem 4.2]{MLP}.

As a corollary of the classification of theorem \ref{const}, for a numerical Campedelli surface $S$ with $\ord(G)=9$ one has
  \[ \pi_1^{alg}(S)=\pi_1^{top}(S)\ \]
  \cite[Proposition 4.3]{MLP}.

\end{remark}

\begin{notation}\label{not} We consider the following families of surfaces $\VV\to B$ and $\Ss\to B$:

\noindent
(\rom1) (``Type A'':) The family
  \[ \VV\ \subset\ B\times W \]
  is the universal family of all divisors $V_b$ in $W:=\PP^1\times \PP^1\times \PP^1$ as in theorem \ref{const}(\rom1), so each $V_b$ has at most rational double points and the generic $V_b$ is smooth. (Here $B\subset\PP H^0(W,\OO_W(3))^G$ is the Zariski open parametrizing divisors $V_b$ that are as in theorem \ref{const}(\rom1).)  The family
  \[ \VV^0\ \to\ B^0 \]
  is the universal family of all {\em smooth\/} divisors $V_b\subset W$ in the linear system $\vert 3H\vert$, so $\VV^0\subset\VV$ is a Zariski open.
  
  The family $\Ss$ is defined as
  \[ \Ss:= \VV/G\ \to\ B \]
  (where $G:=\ZZ_9$ and the $G$-action on $\VV$ is induced by the $G$-action on $W$).

 \noindent
  (\rom2) (``Type B1'':) The family
  \[ \VV\ \subset\ B\times W \]
  is the universal family of all divisors $V_b$ in the flag variety $W$ as in theorem \ref{const}(\rom2), so each $V_b$ has at most rational double points and the generic $V_b$ is smooth. The family
  \[ \VV^0\ \to\ B^0 \]
  is the universal family of all {\em smooth\/} divisors $V_b\subset W$ in the linear system $\vert 3H\vert$, so $\VV^0\subset\VV$ is a Zariski open.
  
  The family $\Ss$ is defined as
  \[ \Ss:= \VV/G\ \to\ B \]
  (where $G:=\ZZ_3^2$ and the $G$--action on $\VV$ is induced by the $G$-action on $W$).
\end{notation}

\begin{lemma}\label{smoothfam} Let $\VV\to B$ be as in notation \ref{not}(\rom1) or (\rom2). Then $\VV$ is a smooth quasi-projective variety.
\end{lemma}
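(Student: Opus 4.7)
My plan is to apply the Jacobian criterion to exhibit $\VV$ as a smooth hypersurface in $B \times W$. Set $V^G := H^0(W, \OO_W(3H))^G$, so that $B$ is a Zariski open in $\PP(V^G)$. Then $\VV \subset B \times W$ is cut out by a single equation, namely the restriction of the tautological section of $\OO_{\PP(V^G)}(1) \boxtimes \OO_W(3H)$ on $\PP(V^G) \times W$. Since $B \times W$ is smooth, it suffices to show that this defining equation has everywhere nonzero differential on $\VV$. At $(b, w) \in \VV$, the differential in the $W$-direction is $d(s_b)_w$, which is nonzero precisely when $V_b$ is smooth at $w$; the differential in the $B$-direction is the evaluation map $T_bB \subset V^G/\C\cdot s_b \to \OO_W(3H)|_w$, $\delta s \mapsto \delta s(w)$, which is nonzero precisely when $w$ is not a basepoint of the invariant linear system $|V^G|$. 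Thus $\VV$ is smooth at $(b,w)$ as soon as either (a) $V_b$ is smooth at $w$, or (b) $w$ is not a basepoint of $|V^G|$.

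The key claim is that condition (b) holds at every $(b, w) \in \VV$. Denote by $W^G_{\ne 1} := \bigcup_{g \in G \setminus \{1\}} \text{Fix}(g)$ the non-free locus of the $G$-action on $W$. A first observation is that the base locus of $|V^G|$ lies inside $W^G_{\ne 1}$: for $w \in W \setminus W^G_{\ne 1}$ the orbit $G \cdot w$ consists of $|G|$ distinct points, and since $\OO_W(3H)$ is very ample, one can choose $s_0 \in H^0(W, \OO_W(3H))$ with $s_0(w) \neq 0$ vanishing at all other orbit points; the $G$-average $s := \sum_{g \in G} g^\ast s_0$ then lies in $V^G$ and satisfies $s(w) = s_0(w) \neq 0$. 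A second observation is that the constraints defining $B$ in notation \ref{not} force $V_b \subset W \setminus W^G_{\ne 1}$ for every $b \in B$: in type B1 this is the literal definition, while in type A one checks that $W^G_{\ne 1} = \text{Fix}(g^3)$ --- elements $g^k$ with $\gcd(k, 9) = 1$ generate $\langle g \rangle$ and thus satisfy $\text{Fix}(g^k) = \text{Fix}(g) \subseteq \text{Fix}(g^3)$, while $g^3$ and $g^6$ share the fixed locus $\text{Fix}(g^3)$. Consequently, for any $(b, w) \in \VV$ we have $w \in V_b \subset W \setminus W^G_{\ne 1}$, so $w$ is not a basepoint of $|V^G|$ and the Jacobian criterion yields smoothness at $(b, w)$. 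Quasi-projectivity of $\VV$ is immediate since it is a locally closed subvariety of the quasi-projective variety $B \times W$.

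The main obstacle is the basepoint-freeness of $|V^G|$ outside $W^G_{\ne 1}$; the matching of the forbidden loci with $W^G_{\ne 1}$ in both cases is then straightforward bookkeeping. A minor technicality is that the averaging argument presupposes a $G$-linearization on $\OO_W(3H)$, which exists because $3H$ is a $G$-invariant divisor class on $W$ in both types; any character twist that could appear in $s(w)$ has no effect on the conclusion, since by construction only the identity term of the sum $\sum_g g^\ast s_0$ contributes at $w$.
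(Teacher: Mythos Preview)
Your argument is structurally sound and, at bottom, rests on the same fact as the paper's: smoothness of $\VV$ follows from base-point-freeness of the invariant linear system $|V^G|$ on the locus swept out by the $V_b$. The paper packages this more efficiently: rather than the Jacobian criterion, it notes that once $|V^G|$ is base-point-free on all of $W$ (which it simply cites from \cite{MLP}), the compactified family $\bar\VV\to W$ is a $\PP^{r-1}$-bundle over the smooth variety $W$, hence smooth, and then $\VV\subset\bar\VV$ is smooth as a Zariski open. In particular the paper never needs your detour through $W^G_{\ne 1}$ and the avoidance condition on $V_b$.

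Where your argument diverges is in attempting to prove base-point-freeness on the free locus via averaging, and this step has a gap. The existence of $s_0\in H^0(W,3H)$ with $s_0(w)\ne 0$ but $s_0(gw)=0$ for all $g\ne 1$ does \emph{not} follow from very ampleness of $3H$. Very ampleness guarantees that sections separate any \emph{pair} of points; it says nothing about separating one specified point from eight others. (For a trivial illustration: $\OO_{\PP^2}(1)$ is very ample, yet for three collinear points no section vanishes at two of them and not the third.) What you actually need is that the nine orbit points impose independent conditions on $|3H|$, and this requires an argument specific to the group action and the line bundle---or, more simply, the citation to \cite{MLP} that the paper uses, which gives base-point-freeness of $|V^G|$ on all of $W$ outright and renders the whole averaging discussion unnecessary.
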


\begin{proof} Let 
  \[ \bar{\VV}\ \subset \ \bar{B}\times W \]
  be the universal family of all (possibly very singular) divisors $V_b$ in the given linear system, where 
  \[\bar{B}=\PP H^0(W,\OO_W(3))^G\cong\PP^r\]
  is the closure of $B$. In both cases the complete linear system $\bar{B}$ is base point free (this is \cite[Section 3.1]{MLP} for type A, and \cite[Section 3.2]{MLP} for type B1), and so $\bar{\VV}$ is a $\PP^{r-1}$-bundle over $W$. As such, since $W$ is smooth, also $\bar{\VV}$ is smooth. It follows that the Zariski open $\VV\subset\bar{\VV}$ is smooth.
    \end{proof}

\section{An intermediate result}

\begin{proposition}\label{van} Let $\VV^0\to B^0$ be as in notation \ref{not} (\rom1) or (\rom2). Let $\VV^1\to B^1$ be the base change, where $B^1\subset B^0$ is a Zariski open. Then
  \[ B^2_{hom}(\VV^1\times_{B^1} \VV^1)_{\QQ}=0\ .\]
  \end{proposition}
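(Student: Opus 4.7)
The plan is to stratify a natural ambient projective variety containing $\VV^1\times_{B^1}\VV^1$ as a Zariski open, and to propagate Lawson-homology properties via Lemmas \ref{paste}, \ref{cut}, and \ref{projbun}. Set $\bar B:=\PP H^0(W,\OO_W(3))^G$ and
\[ I:=\bar\VV\times_{\bar B}\bar\VV\ \subset\ \bar B\times W\times W\ ,\]
a projective variety of dimension $r+4$ (with $r:=\dim\bar B$), which contains $\VV^1\times_{B^1}\VV^1$ as the Zariski open obtained by restricting to $b\in B^1$. Proving $B^2_{hom}(\VV^1\times_{B^1}\VV^1)_{\QQ}=0$ amounts to the weak property in degree $r+2$ for $\VV^1\times_{B^1}\VV^1$.

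First I would analyse the projection $\pi\colon I\to W\times W$. For $(w_1,w_2)\in W\times W$, the fibre of $\pi$ is the projective space of $G$-invariant cubics vanishing at both points, a linear subspace of $\bar B$. Base-point-freeness and $G$-invariance of the linear system $|3H|^G$ (see the proof of lemma \ref{smoothfam}) imply that this fibre has dimension $r-2$ over the open locus $U_0:=W\times W\setminus Z$ and dimension $r-1$ over the closed
\[ Z\ =\ \bigcup_{g\in G}\Gamma_g\ \subset\ W\times W\ , \]
the union of graphs $\Gamma_g=\{(w,gw):w\in W\}$ of the $G$-action. Thus $\dim Z=3$; over $U_0$ resp.\ $Z$ the projection $\pi$ is a projective bundle of rank $r-2$ resp.\ $r-1$, after stratifying $Z$ by its $0$-dimensional pairwise intersections $\Gamma_g\cap\Gamma_{g'}$ (which identify with graphs over the fixed loci $\mathrm{Fix}(g^{-1}g')$, and these are finite in both types A and B1 by a direct computation using the explicit $G$-actions of theorem \ref{const}). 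Both choices of $W$ ($(\PP^1)^3$ and the full flag variety) are smooth cellular, so $W\times W$ and each $\Gamma_g\cong W$ have the strong property in every degree; iterating a strong version of Lemma \ref{paste} (which follows from the same diagram chase once both $U$ and $Y$ are strong) yields the strong property on $Z$ in every degree, while Lemma \ref{cut} gives it on $U_0$ in degree $3$. Lemma \ref{projbun} then upgrades these to the strong property on $\pi^{-1}(U_0)$ and $\pi^{-1}(Z)$ in degree $r+1$, and a final pasting yields that $I$ has the strong property in degree $r+1$.

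Finally I would pass to $\VV^1\times_{B^1}\VV^1=I\cap(B^1\times W\times W)$, whose complement $I|_{\bar B\setminus B^1}$ has generic dimension $r+3$. A direct application of Lemma \ref{cut} delivers the strong, hence weak, property on $\VV^1\times_{B^1}\VV^1$ only in degree $r+3$ — one degree above what is needed. The hard part of the argument will be sharpening this to degree $r+2$: since Lemma \ref{cut} cannot beat the dimension of the removed locus, I expect the bridging step to proceed by a finer stratification of $\bar B\setminus B^1$ into the generic discriminant hypersurface plus lower-dimensional strata of worse degenerations, so that the lemmas can be iterated through pieces of strictly decreasing dimension. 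Alternatively, one may invoke the specific structure of $I|_{\bar B\setminus B^0}$ as a family of products of surfaces with at most rational double point singularities, whose Borel--Moore homology in the critical degree can be shown to be algebraic, thereby neutralising the extra Gysin contribution and yielding the desired vanishing $B^2_{hom}(\VV^1\times_{B^1}\VV^1)_{\QQ}=0$.
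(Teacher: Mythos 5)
Your stratification of the projection to $W\times W$ is essentially the paper's own argument: the jumping locus you call $Z$ plays the role of the paper's $M_1$, and the propagation of the weak/strong properties through lemmas \ref{cut}, \ref{projbun} and \ref{paste} is carried out the same way. But the step you flag as ``the hard part'' is a genuine gap, and neither of your suggested fixes closes it as stated. A finer stratification of $\bar{B}\setminus B^1$ cannot help when combined only with lemma \ref{cut}: the discriminant contributes a divisor in the total space, of dimension $r+3$, and no refinement of strata lowers the dimension of the generic stratum, so lemma \ref{cut} will always cost you that one degree. The paper's resolution is of a different kind: one does \emph{not} try to descend the weak property from the compactification to the open fibre product. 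Instead one observes that the complement $D:=F\setminus(\VV^1\times_{B^1}\VV^1)$ is a divisor in the $(r+4)$-dimensional component $F$, so that $(r+2)$-dimensional cycles supported on $D$ have codimension one in $D$; by the Lefschetz $(1,1)$ theorem applied on a resolution of singularities of $D$, homologically trivial codimension-one cycles there are algebraically trivial. Combined with the localization sequence $B_{r+2}(D)\to B_{r+2}(F)\to B_{r+2}(\VV^1\times_{B^1}\VV^1)\to 0$, this shows that the restriction map $B_{r+2}^{hom}(F)_{\QQ}\to B_{r+2}^{hom}(\VV^1\times_{B^1}\VV^1)_{\QQ}$ is \emph{surjective}, so the vanishing you establish on the compactification descends for free. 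This is close in spirit to your second suggestion (algebraicity of the boundary contribution in the critical degree), but what makes it work is the codimension-one position of the relevant cycles inside the boundary divisor, not any special structure of the degenerate fibres.

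Two secondary points. First, lemma \ref{paste} as stated only yields the \emph{weak} property on the union, so your claim that the total space acquires the \emph{strong} property in degree $r+1$ would need a separate diagram chase (including surjectivity of $L_jH_{2j+1}\to H_{2j+1}$ on the closed stratum); since only the weak property on the total space is needed, this is avoidable. Second, $\bar{\VV}\times_{\bar B}\bar{\VV}$ need not be irreducible: there may be extra components lying over the diagonal $\Delta_W\subset W\times W$, of dimension up to $r+2$ and hence capable of contributing to $B_{r+2}$. The paper isolates the component $F$ containing $\VV^0\times_{B^0}\VV^0$ and splits into two cases according to whether $\dim p_F^{-1}(\Delta_W)$ equals $r+2$ or is smaller; your write-up treats the whole fibre product as a single variety fibred over $W\times W$ and should address this.
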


\begin{proof} The smooth quasi--projective variety $\VV^0\times_{B^0} \VV^0$ is a Zariski open in the projective scheme $\bar{\VV}\times_{\bar{B}}\bar{\VV}$. Here, $\bar{B}\cong\PP^r$ is the closure of $B$, and $\bar{\VV}$ is the universal family of all (possibly very singular) divisors $V_b\subset W$ in the linear system $\vert 3H\vert$ (as in the proof of lemma \ref{smoothfam}).

\begin{lemma}\label{irred} The fibre product $\bar{\VV}\times_{\bar{B}}\bar{\VV}$ consists of an irreducible component $F$ of dimension $r+4$ (containing $\VV^0\times_{B^0} \VV^0$), plus perhaps some other components that lie over the diagonal $\Delta_W$ (and hence are of dimension $\le r+2$).
\end{lemma}

\begin{proof} Suppose $\bar{\VV}\times_{\bar{B}}\bar{\VV}$ is {\em not\/} irreducible, i.e. suppose one can write
  \[ \bar{\VV}\times_{\bar{B}}\bar{\VV} =  F\cup F^\prime\ ,\]
  where $F$ is the closure of the Zariski open $\VV^0\times_{B^0} \VV^0$, and $F^\prime$ is a union of other irreducible components.
  As $\VV^0\times_{B^0} \VV^0$ is irreducible, $F^\prime$ must lie over the locus $\Delta:=\bar{B}\setminus B^0$. Let
   \[  p\colon \bar{\VV}\times_{\bar{B}}\bar{\VV} \to W\times W \]
   be the projection, and suppose $p(F^\prime)$ is {\em not\/} contained in the diagonal $\Delta_W$.  
  For any point $x\in F^\prime$, the fibre $p^{-1}(p(x))$ is irreducible (sublemma \ref{general} below). 
  But if $x\in F^\prime$ is a point not lying over the diagonal $\Delta_W$, the fibre $p^{-1}(p(x))$ meets $F$ (sublemma \ref{general} below).    
  Thus, we see that this fibre is reducible, because we can write
  \[ p^{-1}(p(x)) =  (p\vert_{F})^{-1}(p(x))\cup    (p\vert_{F^\prime})^{-1}(p(x))\ , \]
 where both components are non--empty, distinct and a union of irreducibles. Contradiction; and so we must have $p(F^\prime)\subset \Delta_W$. This forces
 $\dim F^\prime\le 3+(r-1)=r+2$.
 
 \begin{sublemma}\label{general} For any point $y\in W\times W$, the fibre $p^{-1}(y)\subset \bar{\VV}\times_{\bar{B}}\bar{\VV}$ is isomorphic to $\PP^{s}$ (where $s\in\{r-2,r-1\}$). Moreover, for any point $y\in (W\times W)\setminus \Delta_W$, the fibre $p^{-1}(y)$ meets the Zariski open $ \VV^0\times_{B^0} \VV^0$.
  \end{sublemma} 
  
  To prove the first assertion of the sublemma, one remarks that (as the linear system $\bar{B}$ is base point free) a point $y=(w_1,w_2)\in W\times W$ imposes either one or two conditions on $\bar{B}$. For the ``moreover'' part, one
  needs to show that for any two points $w_1,w_2\in W$ with $w_1\not=w_2$, there exists a {\em smooth\/} divisor $V_b$ in the linear system $\bar{B}$ passing through $w_1$ and $w_2$. Since the linear system $\bar{B}$ is base point free, this follows from an easy blow--up argument, cf. \cite[Theorem 2.1]{DH}.
  
   \end{proof}

Let $F\subset \bar{\VV}\times_{\bar{B}} \bar{\VV}$ be the irreducible component as in lemma \ref{irred}. In order to prove proposition \ref{van}, it will suffice to prove the vanishing
  \begin{equation}\label{van2}  B_{r+2}^{hom}(F)_{\QQ}\stackrel{??}{=}0\ .\end{equation}
  (Indeed, using the Lefschetz (1,1) theorem on a resolution of singularities of the complementary divisor $F\setminus (\VV^1\times_{B^1} \VV^1)$, one finds that the natural map induced by restriction
   \[ B_{r+2}^{hom}(F)_{\QQ}\ \to\   B_{r+2}^{hom}(\VV^1\times_{B^1} \VV^1)_{\QQ})=  B^2_{hom}(\VV^1\times_{B^1} \VV^1)_{\QQ} \]
   is surjective.)
   
 Now, let us consider the morphism $p_F\colon F\to W\times W$ (which is the restriction of the morphism induced by projection $p\colon \bar{\VV}\times_{\bar{B}} \bar{\VV}\to W\times W$). We have seen (lemma \ref{irred}) that the inverse image of the diagonal $p^{-1}(\Delta_W)$ has dimension $\le r+2$. Hence, there are the following two possibilities:
 
 \vskip0.3cm
 \noindent
 {\underline{Case 1:}} $\dim (p_F)^{-1}(\Delta_W)=r+2$;
 
\vskip0.3cm
 \noindent
 {\underline{Case 2:}} $\dim (p_F)^{-1}(\Delta_W)<r+2$. 
 
 \vskip0.3cm
 Let us first treat case 1. In this case, lemma \ref{irred} implies that $F^\prime$ is empty, and so $ \bar{\VV}\times_{\bar{B}} \bar{\VV}=F$ is irreducible. 
   To prove the vanishing (\ref{van2}), we employ a stratification argument as in \cite{moit}. That is, there are natural morphisms
   \[  \begin{array}[c]{ccc}
         F= \bar{\VV}\times_{\bar{B}} \bar{\VV} & \xrightarrow{q} & \bar{B}\cong \PP^r   \\
          &&\\
          \downarrow{ p} &&\\
          &&\\
          W\times W
          \end{array}\]
          (where $W$ is the ambient threefold as in notation \ref{not}).
  Let us consider a diagram
   \[ \begin{array}[c]{ccccc}
      N^0 & \hookrightarrow &  \bar{\VV}\times_{\bar{B}} \bar{\VV} & \hookleftarrow & N_1\\
      &&&&\\
        \ \  \downarrow{p^0} &&\ \  \downarrow{p} &&\ \  \downarrow{p_1}\\
        &&&&\\
     M^0 & \hookrightarrow & W\times W & \hookleftarrow & M_1\\
     \end{array}\]
     Here, $M_1\subset W\times W$ is defined as the closed subvariety where the fibre dimension of $p$ is $> r-2$, and $M^0$ is the open complement $M^0:=(W\times W)\setminus M_1$. This shows that $p$ is a ``stratified projective bundle''. That is, letting $p^0$ and $p_1$ denote the restriction of $p$ to $N^0:=p^{-1}(M^0)$ resp. to $N_1:=p^{-1}(M_1)$, we have that $p^0$ is a $\PP^{r-2}$-fibration, and $p_1$ is a $\PP^{r-1}$-fibration. Note that $\dim M_1\le 4$ (since otherwise the subvariety $N_1=(p_1)^{-1}(M_1)$ would be of dimension $\ge r+4$, in contradiction with the irreducibility of $\bar{\VV}\times_{\bar{B}} \bar{\VV} $).
      
 We are now ready to prove the vanishing (\ref{van2}). First, we note that $W\times W$ has trivial Chow groups, which implies that $W\times W$ has the strong property in any degree (indeed, rational and homological equivalence coincide on $W\times W$, and $W\times W$ has no odd--degree cohomology). 
 Lemma \ref{cut} thus implies that $M^0$ has the strong property in degree $4$. Lemma \ref{projbun} then implies that $N^0$ has the strong property in degree $4+(r-2)=r+2$.
 As $\dim M_1\le 4$, $M_1$ has the weak property in degree $3$ (for dimension reasons). Lemma \ref{projbun} implies that $N_1$ has the weak property in degree $3+(r-1)=r+2$. Applying lemma \ref{paste}, we find that $ \bar{\VV}\times_{\bar{B}} \bar{\VV}$ has the weak property in degree $r+2$, which by definition means that
   \[    B^{hom}_{r+2}( \bar{\VV}\times_{\bar{B}} \bar{\VV})_{\QQ}= \ker\Bigl( L_{r+2} H_{2r+4}( \bar{\VV}\times_{\bar{B}} \bar{\VV})\to H_{2r+4}( \bar{\VV}\times_{\bar{B}} \bar{\VV})\Bigr) =0\ . \]
   This proves the desired vanishing (\ref{van2}) in case 1.
   
  It remains to treat case 2. When we are in case 2, we may simply disregard what happens over the diagonal $\Delta_W$. More precisely,  let us define 
   \[ F^0:= F\setminus (p_F)^{-1}(\Delta_W)\ .\]   
  Since the complement $ (p_F)^{-1}(\Delta_W)$ has dimension $\le r+1$, restriction induces an isomorphism
   \[ B_{r+2}(F)\ \xrightarrow{\cong}\ B_{r+2}(F^0)\ .\]
   As such, in case 2 the vanishing (\ref{van2}) is equivalent to the vanishing
   \begin{equation}\label{van3} B_{r+2}^{hom}(F^0)_{\QQ}\stackrel{??}{=}0\ .\end{equation}
  To prove the vanishing (\ref{van3}), we play the same game (for $F^0$) that we played in case 1 (for $F$). That is, we consider a diagram of fibre squares
    \[   \begin{array}[c]{ccccc}
      F^1 & \hookrightarrow &   F^0 & \hookleftarrow & F_1\\
      &&&&\\
        \ \  \downarrow{p^1} &&\ \  \downarrow{p^0} &&\ \  \downarrow{p_1}\\
        &&&&\\
     M^0\setminus (\Delta_W\cap M^0) & \hookrightarrow & (W\times W)\setminus \Delta_W & \hookleftarrow & M_1\setminus (\Delta_W\cap M_1)  \\
     \end{array}\]
   Here the arrows $p^0, p^1,p_1$ are obtained by restricting the morphism $p_F\colon F\to W\times W$ to $F^0$, resp. to $F^1$ resp. $F_1$. We still have (just as in case 1) that $p^1$ is a $\PP^{r-2}$-fibration, and $p_1$ is a $\PP^{r-1}$-fibration. 
   
  As we have seen, $W\times W$ has the strong property in any degree, and the diagonal $\Delta_W$ is of dimension $3$. Lemma \ref{cut} then implies that $(W\times W)\setminus \Delta_W$ has the strong property (in degree $3$ and hence a fortiori) in degree $4$.  Applying lemma \ref{cut} once more, we find that also $M^1$ has the strong property in degree $4$. Lemma \ref{projbun} then implies that $F^1$ has the strong property in degree $4+(r-2)=r+2$. The 
  complement $M_1\setminus (\Delta_W\cap M_1)$, being of dimension $\le 4$, has the weak property in degree $3$. Lemma \ref{projbun} now implies that $F_1$ has the weak property in degree $3+(r-1)=r+2$. Applying lemma \ref{paste}, we find that $ F^0 $ has the weak property in degree $r+2$, which is exactly the desired vanishing (\ref{van3}).
   
Proposition \ref{van} is now proven.  
   \end{proof}

\begin{remark} The main place where the present note diverges from the argument of \cite{V8} is in the proof of proposition \ref{van}. Indeed, in \cite{V8} the vanishing of the codimension $2$ Griffiths group of a certain fibre product is obtained by establishing that this fibre product has a smooth projective compactification that is rationally connected \cite[Proposition 2.2]{V8}. Since this seems unfeasible in the context of the present note, we have replaced this by the stratification argument of proposition \ref{van}.

\end{remark}

\begin{remark}\label{more} For possible later use, we remark that the argument proving proposition \ref{van} actually applies to the following more general setting: $W$ is a smooth projective threefold with trivial Chow groups, $\bar{B}$ is a base point free linear system on $W$.
\end{remark}

\section{The main result}

\begin{theorem}\label{main} 
Let $S$ be a numerical Campedelli surface with $\ord(G)=9$. Then
  \[ A^2(S)=\ZZ\ .\]
  \end{theorem}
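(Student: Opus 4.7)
My plan is to follow Voisin's spread method as developed in the proof of proposition \ref{gen}, but substituting the key nilpotence input: where proposition \ref{gen} invokes Kimura finite-dimensionality (which depends on a Chow-theoretic hypothesis on very general fibres), I will instead combine proposition \ref{van} with the Voevodsky--Voisin weak nilpotence of proposition \ref{weaknilp}. First I reduce the problem: since $S$ is birational to its canonical model $\bar{S}$ and both are Alexander by proposition \ref{alex} (the singularities of $\bar{S}$ being rational double points), proposition \ref{birat} reduces the statement to $A_0(\bar{S})_\QQ = \QQ$. By theorem \ref{const}, $\bar{S} = V/G$ in each of the three types, and lemma \ref{alexquot} further reduces the problem to proving $A_0^{hom}(V)^G_\QQ = 0$.

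For Types A and B1, I work with the family $\VV^0 \to B^0$ of notation \ref{not}, whose total space is smooth by lemma \ref{smoothfam}. Consider the relative projector $\Delta_G := \frac{1}{|G|}\sum_{g\in G}\Gamma_g$ and its middle K\"unneth component $\Delta_G^-$, defined exactly as in the proof of proposition \ref{gen}. The essential Hodge-theoretic input is that $p_g(\bar{S}) = 0$ forces $H^{2,0}(V_b)^G = H^{2,0}(\bar{S}) = 0$, so the Lefschetz $(1,1)$ theorem gives that $H^2(V_b)^G_\QQ$ is spanned by divisor classes. Consequently $[\Delta_G^-|_b]$ is cohomologous to a cycle supported on $D_b\times D_b$ for some divisor $D_b\subset V_b$, and Voisin's Hilbert scheme spreading (\cite[Proposition 3.7]{V0}) produces a relative divisor $\DD\subset\VV^0$ together with a relative cycle $\gamma$ supported on $\DD\times_{B^0}\DD$ such that $(\Delta_G^- - \gamma)|_b$ is homologically trivial on $V_b\times V_b$ for very general $b$.

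The crucial step is to upgrade this fibrewise homological triviality to algebraic triviality on a Zariski-open relative fibre product $\VV^1\times_{B^1}\VV^1$. After absorbing any residual global cohomology class by a suitable modification of $\gamma$, the cycle $\Delta_G^- - \gamma$ lies in $A^2_{hom}(\VV^1\times_{B^1}\VV^1)_\QQ$; by proposition \ref{van} it is therefore algebraically trivial, and its restriction $(\Delta_G^- - \gamma)|_b$ is an algebraically trivial self-correspondence of $V_b$. Proposition \ref{weaknilp} then yields $((\Delta_G^- - \gamma)|_b)^{\circ N} = 0$ in $A^2(V_b\times V_b)_\QQ$ for some $N$. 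Expanding this nilpotence identity, using the idempotence of $\Delta_G^-|_b$, and invoking lemma \ref{voi} to extend over $B^1$, one obtains $\Delta_G^-|_b = \gamma''|_b$ in $A^2(V_b\times V_b)_\QQ$ for all $b\in B^1$ with $\gamma''|_b$ supported on $D_b\times D_b$. Since the action of $\gamma''|_b$ on $A_0^{hom}(V_b)^G_\QQ = A_0^{AJ}(V_b)^G_\QQ$ factors through $A_1^{AJ}(D_b)_\QQ = 0$ (as $D_b$ is a curve), we conclude $A_0^{hom}(V_b)^G_\QQ = 0$ for every $b\in B^1$, in particular for very general $b\in B$. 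Proposition \ref{gen} then propagates this to all fibres of the Types A and B1 families.

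For Type B2, theorem \ref{const}(iii) realizes any Type B2 surface as a special fibre in a family whose general fibre is of Type B1 and is a quotient variety. Applying proposition \ref{gen} to this auxiliary family, with the very general vanishing $A_0^{hom}(V_b)^G_\QQ = 0$ already supplied by the Type B1 result, yields the conclusion for Type B2 as well. The principal obstacle is the upgrade in paragraph three from homological to algebraic equivalence on the relative fibre product, which rests entirely on proposition \ref{van}; since the compactification of $\VV^0\times_{B^0}\VV^0$ is not rationally connected, the direct approach of \cite[Proposition 2.2]{V8} is unavailable, and one must rely on the stratification into projective bundles that underlies the proof of proposition \ref{van}.
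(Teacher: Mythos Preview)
Your proposal is correct and follows essentially the same route as the paper's own proof. One point deserves sharpening: the passage from fibrewise to global homological triviality, which you phrase as ``absorbing any residual global cohomology class by a suitable modification of $\gamma$'', is made precise in the paper via Voisin's Leray spectral sequence argument \cite[Lemmas 3.11 and 3.12]{V0}, and this introduces an additional correction term $\delta\in A^2(W\times W)_\QQ$ (rather than a modification of $\gamma$ supported on $\DD\times_{B^1}\DD$). The fibrewise restriction $\delta_b$ is then only a \emph{degenerate} correspondence (supported on $D_b\times V_b\cup V_b\times D_b$, using that $A^\ast_{hom}(W)=0$), not one supported on $D_b\times D_b$ as you claim for $\gamma''\vert_b$. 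This does not affect the conclusion---degenerate correspondences form a two-sided ideal and act trivially on $A^2_{AJ}(V_b)_\QQ$---but your justification via factoring through $A_1^{AJ}(D_b)_\QQ$ should be adjusted accordingly. (Also, the paper opens with Rojtman's theorem to reduce to rational coefficients, which you omit.)
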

  
  \begin{proof} First of all, we note that $q(S)=0$ and so, thanks to Rojtman's result \cite{Roj}, it will suffice to prove that $A^2(S)_{\QQ}=\QQ$. 
  
 Let $\bar{S}$ be the canonical model of $S$. Let us assume that $\bar{S}$ is of type A or type B1, in the language of theorem \ref{const}. (We can treat these two cases simultaneously; the remaining case where $S$ is of type B2 will be dealt with separately.) This means that $\bar{S}=V/G$, where $V$ is a surface with rational double points and $G$ a finite abelian group. The surface $V$ is an Alexander scheme (rational double points can be resolved by so-called A-D-E curves), and so $\bar{S}$ is an Alexander scheme (lemma \ref{alexquot}).
 Since $A_0()_{\QQ}$ is a birational invariant of surfaces with rational singularities (proposition \ref{birat}), it will suffice to prove that the canonical model $\bar{S}$ has
    \[ A_0(\bar{S})_{\QQ}\stackrel{??}{=}\QQ\ .\]
  In view of lemma \ref{alexquot}, it thus suffices to prove that
   \begin{equation}\label{goal} A_0(V)^G_{\QQ} \stackrel{??}{=}\QQ\ .\end{equation}
   
   We have seen (theorem \ref{const}) that the surface $V$ is a fibre $V_b$ of the universal family $\VV\to B$ as in notation \ref{not}(\rom1) or (\rom2). To prove that every $V_b$ satisfies (\ref{goal}), it suffices (in view of proposition \ref{gen}) to prove that the very general $V_b$ satisfies (\ref{goal}). This will be convenient: in the course of the argument, we are at liberty to replace the base $B$ by a smaller (non-empty) Zariski open. In particular, from now on we can and will suppose that the base of our family $B^1$ is contained in $B^0$, i.e. every fibre $V_b$ is smooth.
   
   The fact that the canonical model $\bar{S}=V_b/G$ has $p_g(\bar{S})=0$ means that
   \[ H^{2,0}(V_b)^G=0\ \ \ \forall\ b\in B^1\ .\]
   Using the Lefschetz $(1,1)$ theorem, this means that the cycle class map induces an isomorphism
   \begin{equation}\label{nsiso} NS(V_b)^G\ \xrightarrow{\cong}\ H^2(V_b)^G\ \ \ \forall\ b\in B^1\ .\end{equation}   
   Let us consider the relative correspondence
 \[ \Gamma:=  \Delta_G^- \ \ \ \in\ A^2(\VV^1\times_{B^1} \VV^1)_{\QQ}\ ,\]
   where $\Delta_G^-$ is (the restriction to the smaller base $B^1$ of) the relative correspondence of the proof of proposition \ref{gen}.
   For any $b\in B$, the fibrewise restriction
    \[ \Gamma_b:=\Gamma\vert_{V_b\times V_b}\ \ \ \in\ A^2(V_b\times V_b)_{\QQ} \]
    acts on cohomology as a projector onto $H^2(V_b)^G$, and acts on $A^2(V_b)_{\QQ}$ as a projector onto $A^2_{hom}(V_b)_{\QQ}^G$.
    Because $\Gamma_b$ acts on cohomology as projector onto $H^2(V_b)^G$, we have
    \[ \Gamma_b\ \ \ \in\     H^2(V_b)^G \otimes H^2(V_b)^G \ \ \   \subset\ H^4(V_b\times V_b)\ \ \  \forall\ b\in B^1\ .\]
    Because of the isomorphism (\ref{nsiso}), this implies the following: for any $b\in B^1$, there exists a divisor $D_b\subset V_b$, and a cycle $\gamma_b$ supported on $D_b\times D_b$, with the property that
     \[ \Gamma_b=\gamma_b\ \ \ \hbox{in}\ H^4(V_b\times V_b)\ .\]
     Thanks to Voisin's Hilbert schemes argument \cite[Proposition 3.7]{V0} (which is the same as \cite[Proposition 4.25]{Vo}), these data can be ``spread out'' over the family. That is, there exists a divisor $\DD\subset\VV^1$, and a cycle $\gamma\in A^2(\VV^1\times_{B^1} \VV^1)_{\QQ}$ with support on $\DD\times_{B^1} \DD$, such that
     \[ (\Gamma -\gamma)_b:=  (\Gamma -\gamma)\vert_{V_b\times V_b}  =0\ \ \ \hbox{in}\ H^4(V_b\times V_b)\ \ \ \forall b\in B^1\ .\]     
  
  That is, the relative cycle $\Gamma-\gamma\in A^2(\VV^1\times_{B^1} \VV^1)_{\QQ}$ is fibrewise homologically trivial. The next step is to make this relative cycle {\em globally\/} homologically trivial. This can be done thanks to Voisin's Leray spectral sequence argument \cite[Lemmas 3.11 and 3.12]{V0} (cf. also \cite[Lemma 1.2]{V8}). 
  The outcome of this is as follows: there exists a cycle $\delta\in A^2(W\times W)_{\QQ}$ (where $W$ is either $\PP^1\times \PP^1\times\PP^1$ or a flag variety in $\PP^2\times\PP^2$, depending on whether we are in case (\rom1) or in case (\rom2) of theorem \ref{const}), with the property that after shrinking $B^1$ we have
  \[ \Gamma -\gamma - (B^1\times\delta)\vert_{\VV^1\times_{B^1} \VV^1}\ \ \ \in\ A^2_{hom}(\VV^1\times_{B^1} \VV^1)_{\QQ}\ .\]
  In view of proposition \ref{van}, this is the same as saying that
  \[ \Gamma -\gamma - (B^1\times\delta)\vert_{\VV^1\times_{B^1} \VV^1}\ \ \ \in\ A^2_{alg}(\VV^1\times_{B^1} \VV^1)_{\QQ}\ .\]  
  In particular, restricting to any fibre we obtain that
  \[ (\Gamma-\gamma)_b - \delta_b\ \ \ \in\ A^2_{alg}(V_b\times V_b)_{\QQ}\ \ \ \forall\ b\in B^1\ .\]
  But then, in view of the weak nilpotence result (proposition \ref{weaknilp}), this means that for any $b\in B^1$ there exists $N_b$ such that
  \begin{equation}\label{this} \Bigl( (\Gamma-\gamma)_b - \delta_b\Bigr)^{\circ N_b}=0\ \ \ \hbox{in}\ A^2_{}(V_b\times V_b)_{\QQ}\  .\end{equation}
  Since $A^\ast_{hom}(W)=0$, the restriction $\delta_b$ is a {\em degenerate correspondence\/} (meaning that it is supported on $D_b\times V_b\cup V_b\times D_b$, for some divisor $D_b\subset V_b$). Up to some further shrinking of $B^1$, we may assume that the restriction $\gamma_b$ is also a degenerate correspondence.
  Developing the expression (\ref{this}) for any given $b\in B^1$, we obtain a rational equivalence
  \begin{equation}\label{rateq}  (\Gamma_b)^{\circ N_b} -\gamma^\prime_b =0\ \ \ \hbox{in}\ A^2_{}(V_b\times V_b)_{\QQ}\   ,\end{equation}
  where $\gamma^\prime_b$ is a sum of compositions of correspondences containing either a copy of $\gamma_b$ or a copy of $\delta_b$.
  But degenerate correspondences form a two-sided ideal in the ring of correspondences \cite[Example 16.1.2(b)]{F}, and so $\gamma^\prime_b$ is degenerate. On the other hand, it is readily checked that  $\Gamma_b=\Delta^-_G\vert_{V_b\times V_b}$ is idempotent. Equality (\ref{rateq}) now simplifies to
  \begin{equation}\label{rat2} \Gamma_b -\gamma^\prime_b =0\ \ \ \hbox{in}\ A^2_{}(V_b\times V_b)_{\QQ}\   ,\end{equation}
  where $\gamma^\prime_b$ is degenerate. Clearly, degenerate correspondences act trivially on $A^2_{AJ}(V_b)_{\QQ}$, and so (\ref{rat2}) implies that
   \[ (\Gamma_b)_\ast A^2_{AJ}(V_b)_{\QQ}=0\ \ \ \ \forall b\in B^1.\]
   On the other hand, by construction $\Gamma_b$ is a projector on $A^2_{AJ}(V_b)_{\QQ}^G$, and so it follows that
   \[   A^2_{AJ}(S_b)_{\QQ}=A^2_{AJ}(V_b)_{\QQ}^G=0\ \ \ \forall\ b\in B^1\ .\]
   Since $q(S_b)=0$, this implies (\ref{goal}) for Campedelli surfaces of type A1 or B1.
   
   Finally, it remains to treat the case where the canonical model $\bar{S}$ is of type B2 (i.e., case (\rom3) of theorem \ref{const}). In that case (cf. theorem \ref{const}(\rom3)), we can find a family $\Ss\to B$ such that $\bar{S}$ is isomorphic to the fibre $S_0$, and a general fibre $S_b, b\not=0$ is a quotient variety of type B1.  The very general fibre will have $A^2(S_b)_\QQ=\QQ$ because it is a surface of type B1. In view of proposition \ref{gen} (with $G=\{\ide\}$), we then also have $A_0(S_0)_{\QQ}=\QQ$. (NB: rather than invoking proposition \ref{gen} at this point, we could alternatively apply \cite[Theorem 3.1]{KSZ} to conclude that $A_0(S_0)_{\QQ}=\QQ$.)  
   
   But the fibre ${S}_0$ is isomorphic to $\bar{S}$ and hence birational to $S$, and so we find (applying proposition \ref{birat}) that
   $A_0(S)_{\QQ}=\QQ$ in this case as well. 
   
   The theorem is now proven. 
  \end{proof}

  \section{A corollary} 
  
  \begin{corollary}\label{cor} Let $S$ be a numerical Campedelli surface with $G=\ZZ_3^2$ as in \cite[Section 5 Example 2]{CMLP}, and let $T=S/\sigma_1$ be as in loc. cit., so $T$ is a numerical Godeaux surface with $\pi_1(T)=\ZZ_3$. Then
  \[ A^2(T)=\ZZ\ .\]
   \end{corollary}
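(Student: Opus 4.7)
The plan is to deduce the corollary as a direct consequence of Theorem \ref{main}, using the Alexander-scheme formalism developed in Section~1.

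First, I would apply Theorem \ref{main} to $S$: since $G=\ZZ_3^2$ has order $9$, we obtain $A_0(S)_{\QQ}=\QQ$. Second, I would observe that the involution $\sigma_1$ acts on the smooth surface $S$ with only isolated fixed points (a free action is ruled out numerically, since $\chi(\OO_S)=1$ would force $\chi(\OO_{S/\sigma_1})=1/2$), so the geometric quotient $S':=S/\sigma_1$ is a normal projective surface whose singularities are at worst nodes. By Vistoli's Theorem \ref{vis} it is therefore an Alexander scheme, and Lemma \ref{alexquot} applied to $\langle\sigma_1\rangle\subset\aut(S)$ gives
\[ A_0(S')_{\QQ}\ \cong\ A_0(S)_{\QQ}^{\sigma_1}\ =\ \QQ. \]

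Third, the numerical Godeaux surface $T$ constructed in \cite[Section 5 Example 2]{CMLP} is birational to $S'$ (either $T=S'$ or $T\to S'$ is the minimal desingularization, whose exceptional divisor consists of $(-2)$-curves). Since $T$ and $S'$ are both normal projective Alexander surfaces, Proposition \ref{birat} yields $A_0(T)_{\QQ}\cong A_0(S')_{\QQ}=\QQ$. Finally, the assumption that $T$ is a numerical Godeaux surface gives $q(T)=0$, so $\alb(T)=0$ and Rojtman's theorem \cite{Roj} forces the torsion subgroup of $A_0(T)$ to vanish; together with $A_0(T)_{\QQ}=\QQ$ (and the degree map to $\ZZ$) this implies $A^2(T)=A_0(T)=\ZZ$.

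I foresee no substantial obstacle: the corollary is a formal consequence of Theorem \ref{main} combined with the quotient and birational behaviour of Chow groups encoded in Lemma \ref{alexquot} and Proposition \ref{birat}. The only point that requires brief verification — entirely routine — is checking the precise relation between $T$ and $S/\sigma_1$ as used in \cite{CMLP}, but in either possible interpretation (quotient or its minimal resolution) the Alexander formalism handles the transfer uniformly.
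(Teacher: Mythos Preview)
Your proposal is correct and follows the same approach as the paper: the paper's proof is the single sentence ``This is immediate from theorem \ref{main},'' and your argument simply spells out why this immediacy holds, via Lemma~\ref{alexquot}, Proposition~\ref{birat}, and Rojtman's theorem. Your added care about whether $T$ is the quotient itself or its minimal resolution is a reasonable precaution and does not affect the conclusion.
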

   
   \begin{proof}
   This is immediate from theorem \ref{main}.
   \end{proof}

\vskip1cm
\begin{nonumberingt} Thanks to Claudio Fontanari for inviting me to Trento, where this note was written. Thanks to Roberto Pignatelli for some inspiring conversations.
Grazie mille to Yoyo, Kai and Len for joining me in sunny Trento.
\end{nonumberingt}

\vskip1cm

\end{document}